\newtheorem{theorem}{Theorem} 
\newtheorem{lemma}[theorem]{Lemma} 
\newtheorem{proposition}[theorem]{Proposition} 
\newtheorem{remark}{Remark} 
\newtheorem{definition}[theorem]{Definition} 
\newtheorem{corollary}[theorem]{Corollary}
\numberwithin{equation}{section}
\newcommand{\R}{\mathbb{R}}
\newcommand{\bH}{H}
\newcommand{\AC}{AC}
\newcommand{\bL}{L}
\newcommand{\bT}{\mathcal{T}}
\newcommand{\dist}{\mathrm{dist}}
\newcommand{\bv}{\mathbf{v}}
\newcommand{\eps}{\epsilon}
\newcommand{\Dx}{d_x}
\newcommand{\Dv}{d_v}
\newcommand{\Lip}{\mathrm{Lip}}
\DeclareMathOperator{\epi}{\mathbf{Epi}}
\DeclareMathOperator{\hypo}{\mathbf{Hypo}}
\DeclareMathOperator{\co}{\overline{\mathrm{co}}}
\begin{document}
\selectlanguage{english}

\title{Uniqueness of the viscosity solution of a constrained Hamilton-Jacobi equation
}
\author{Vincent Calvez} 
\affil{Institut Camille Jordan, UMR 5208 CNRS \& Universit\'{e} Claude Bernard Lyon 1,  France\\
\texttt{vincent.calvez@math.cnrs.fr}}
\author{King-Yeung Lam}
\affil{Department of Mathematics, Columbus, OH, United States\\
\texttt{lam.184@math.ohio-state.edu}}
\date{}
\maketitle
\thispagestyle{empty}

\affil[$\star$]{}

\begin{abstract}
In quantitative genetics, viscosity solutions of Hamilton-Jacobi equations appear naturally in the asymptotic limit of selection-mutation models when the population variance vanishes. They  have to be solved together with an unknown function $I(t)$ that arises as the counterpart of a non-negativity constraint on the solution at each time. Although the uniqueness of viscosity solutions is known for many variants of Hamilton-Jacobi equations, the uniqueness for this particular type of constrained problem was not resolved, except in a few particular cases. Here, we provide a general answer to the uniqueness problem, based on three main assumptions: convexity of the Hamiltonian function $H(I,x,p)$ with respect to $p$, monotonicity of $H$ with respect to $I$, and $BV$ regularity of $I(t)$. 
\end{abstract}


\section{Introduction}

This note is intended to  address uniqueness of the viscosity solution of the following Hamilton-Jacobi equation, under a non-negativity constraint:
\begin{equation}\label{eq:HJ}
\begin{cases}
\partial_t u(t,x) + \bH(I(t),x,\Dx u(t,x)) = 0\, ,\quad & t\in (0,T)\, ,\;  x\in \R^d,\medskip\\
  \displaystyle\min_{x\in \R^d} u(t,x) = 0\,, & t \in (0,T),\\
u(0,x) = g(x)\,, \; & x \in \R^d,
\end{cases}
\end{equation}
where $\bH:   \R \times \R^d\times\R^d\to \R$ 
is of class $\mathcal C^2$, and the initial data $g\in W^{1,\infty}_{loc}(\R^d)$ satisfies $\min g = 0$. 
%
%
This problem arises naturally in the analysis of quantitative genetics model in the asymptotic regime of small variance \cite{DJMP05,Perthame-book, BP07,PB08,BMP,LMP11}. 

The main difficulty beyond the classical issue of weak solutions in the viscosity sense stems from the role played by the scalar quantity $I(t)$ which is subject to no equation, but is attached to the non-negativity constraint $\min u(t,\cdot) = 0$. Moreover, its regularity  in context is usually low, typically of bounded variation  \cite{Perthame-book,PB08,BMP}. Interestingly, we shall see that $BV$ seems to be the natural regularity ensuring uniqueness of the constrained problem.  
In order to give a sense to problem \eqref{eq:HJ} in such a setting, we use the theory of viscosity solutions for equations with a measurable dependence in time which was first studied by H. Ishii \cite{Ishii} and then by P.L. Lions and B. Perthame \cite{LP87}.


\subsection{Motivation and previous works}
\label{sec:motivation}
A special case of constrained Hamilton-Jacobi equation \eqref{eq:HJ} arises in the  asymptotic  limit of the following  quantitative genetics  model proposed in \cite{BP07,PB08} (see also \cite{Perthame-book,BMP,LMP11}):
%
\begin{equation}\label{eq:pop}
\eps \partial_t n_\epsilon  =  n_\epsilon  R( I_\epsilon(t),x) + \epsilon^2 \Delta n_\epsilon \, ,\quad t>0\, ,\; x\in \R^d\medskip \,,\quad 
I_\epsilon(t) = \int_{\mathbb{R}^d} \psi(x) n_\epsilon(t,x)\,dx,
\end{equation}
where $n_\epsilon(t,x)$ is the density of a population structured by a $d$-dimensional  phenotypical trait $x$.   The reproduction rate $R$ of a given individual  depends both on its trait $x$, and the environmental impact of the population $I_\eps$. Individuals may burden differently,  and that burden is weighted by the function $\psi$ which is  bounded below and above by positive constants. The key point is that $I_\eps$ is a scalar quantity, so that individuals    compete for a single resource.   As is natural for biological populations, the density-dependent feedback is negative, meaning that $R(I,x)$ is  decreasing with respect to $I$.

The Hopf-Cole transformation $u_\epsilon = -\epsilon \log n_\epsilon$ transforms \eqref{eq:pop} into the following equation:
\[
{\partial_t u_\epsilon}  + R( I_\epsilon(t),x) + |\Dx u_\eps|^2 = \epsilon \Delta u_\epsilon,
\]
which yields formally  \eqref{eq:HJ} in the  vanishing viscosity limit $\epsilon \to 0$, with $H(I,x,p) =   R(I,x) + |p|^2$:  
\begin{equation}\label{eq:HJa}
\begin{cases}
\partial_t u   + R(I(t),x) + |\Dx u|^2=0\, ,\quad & t\in (0,T)\, ,\; x\in \R^d,\medskip\\
\displaystyle\min_{x\in \R^d} u(t,x) = 0\,, & t \in (0,T),\\
u(0,x) = g(x)\,, & x \in \R^d .
\end{cases}
\end{equation}
In fact, locally uniform convergence  to a viscosity solution was established under suitable assumptions on $R$ and the initial data, but along subsequences $\eps_n\to 0$ \cite{PB08,BMP,LMP11}.   Therein,  compactness of $\{I_\epsilon\}$ usually follows from a uniform $BV$ estimate.
The constraint $\min u(t,\cdot) = 0$ can   then be derived from natural properties of the integral $I_\epsilon = \int \psi \exp(-u_\epsilon/\epsilon)\, dx$ being uniformly positive and bounded in~$\epsilon$,  as a consequence of the negative feedback of $I_\epsilon$ on  growth.  However, the convergence of $u_{\epsilon_n}\to u$ does not bring any direct information about the limit function $I$ in the limit $\eps_n \to 0$, except that the constraint $\min u(t,\cdot) = 0$ must be satisfied at any time.

The uniqueness of the limiting problem \eqref{eq:HJa}, if available, enables to obtain the convergence of the whole family of solutions $\{u_\epsilon\}$ as $\epsilon\to 0$. It interesting in the mathematical as well as biological perspectives, since the limiting problem \eqref{eq:HJa} determines much of the Darwinian  evolutionary  dynamics of the population model \eqref{eq:pop}. When $\epsilon$ is small, the population $n_\epsilon$ concentrates at the  point(s) where the limit function $u$ reaches its minimum value $0$.

The uniqueness  was first treated in \cite{PB08},  for the particular case when $R(I,x)$ is separable in the following sense: 
$$
R(I,x) = B(x) - D(x) Q(I), \quad \text{ or } \quad R(I,x) = B(x) Q(I) - D(x),
$$
with  positive functions $B,D$, and a monotonic function $Q$ 
such that $R$ is decreasing with respect to $I$. 

Later on, the uniqueness for \eqref{eq:HJa} was treated in \cite{RM17} under convexity assumptions on $R(I,x)$ and  the initial condition $g$, essentially: $R$ decreasing with respect to $I$, and concave with respect to $x$, plus $g$ convex. It was proved that convexity is propagated forward in time so that the solution $u(t,x)$ to \eqref{eq:HJa} is a solution in the classical $\mathcal C^1$ sense. Hence, it has always a unique minimum point $\overline{x}(t)$, which is a smooth function of $t$. As a result, $I(t)$  is necessarily smooth in that setting, since it can be determined by the implicit relation $R(I(t),\overline{x}(t)) = 0$. Biologically interpreted, their results describes very well the Darwinian dynamics of a monomorphic population as it evolves smoothly towards a (global) evolutionary attractor. Recently, the preprint \cite{Kim} tackled the uniqueness of \eqref{eq:HJa} when the trait space is one-dimensional, with a mixture of separable and non-separable growth rate $R(I,x)$, without convexity assumptions. However, the uniqueness result is also restricted to the case of continuous functions $I(t)$, which is not guaranteed in the absence of convexity. 

\subsection{Assumptions and main result}

In this paper, we establish uniqueness of solutions to problem \eqref{eq:HJ} under   mild conditions. In distinction with previous works, we assume  neither  (i) separability of the Hamiltonian $H(I,x,p)$ in any of its variables;   nor (ii) convexity in the trait variable $x \in \mathbb{R}^d$.   In particular, we can handle solutions $(u,I)$, allowing possibly: 
\begin{itemize}
\item $x \mapsto u(t,x)$ to possess multiple minimum points $\{\overline{x}_i(t)\}$; and
\item the Lagrange multiplier $I(t)$ to be discontinuous. 
\end{itemize}
Both of them are natural and attractive features of the solutions of population genetics models.

We restrict to $ \mathcal{C}^2$ Hamiltonian functions $H(I,x,p)$ which are convex and super-linear with respect to the third variable $p$:    
\begin{description}
\item[(H1):] \quad $\displaystyle  (\forall {I}, x,p)\quad  d^2_{p,p}H({I},x,p)>0\,, \quad (\forall {I}, x)\quad\lim_{|p|\to +\infty} \dfrac{H(I,x,p)}{|p|} = +\infty\, .$

\end{description}
Our uniqueness result strongly relies on the following monotonicity assumption: 
\begin{description}
\item[(H2):] \quad $\displaystyle  (\forall {I}, x,p)\quad    \dfrac {\partial H}{\partial I} ({I},x,p)<0\, .$
\end{description}

As we are dealing with convex Hamiltonians, it is approriate to reformulate the problem using suitable  representation formulas: For a given function $I(t)$, we define the {\em variational solution} $V(t,x)$ of \eqref{eq:HJ} as follows:
\begin{equation}\label{eq:Hopf Lax}
V(t,x) = \displaystyle \inf_{\left \{\begin{subarray}{c}  \gamma\in AC(0,t)\\ \gamma(t) = x\end{subarray}\right \}}\left\{ \int_0^t L({I}(s), \gamma(s),\dot \gamma(s))\, ds  + g(\gamma(0))\right\}\, ,
\end{equation}
for $(t,x) \in (0,T) \times \mathbb{R}^d$. 
Here $\bL: \R \times \R^d\times\R^d\to \R$ is the Lagrangian function, {\em i.e.} the  Legendre transform (or convex conjugate) of $\bH$ defined as: 
\begin{equation}\label{eq:legendre}
\bL({I}, x,v) = \sup_{p\in \R^d} \left \{p\cdot v - \bH({I}, x,p)\right \} \,.
\end{equation}
It is such that $d_p \bH$ and $\Dv \bL$ are reciprocal functions.  In this formulation, the problem \eqref{eq:HJ} becomes the determination of $I(t)$ so that the  value function $V(t,x)$ satisfies the constraint: $\min  V(t,\cdot) = 0$.
In the formulation \eqref{eq:Hopf Lax}, the role played by the scalar quantity ${I}(t)$ is perhaps more apparent:  it should somehow be adjusted in an infinitesimal and incremental way  to satisfy the following constraint at each time,   among all $\gamma \in AC(0,t)$ irrespective of the endpoint of $\gamma$:
\begin{equation}\label{eq:minimal}
(\forall t)\quad \inf_{\gamma\in AC(0,t)}\left\{ \int_0^t L({I}(s), \gamma(s),\dot \gamma(s))\, ds  + g(\gamma(0))\right\} = 0\, .
\end{equation}
Our methodology relies on the Lagrangian formulation  of the constraint \eqref{eq:minimal}. 
Due to the above variational reformulation, it is more appropriate to write the assumptions on the Lagrangian function: 
Assumptions {\rm (H1)}-{\rm (H2)} can be recast as: 
\begin{description}
\item[(L1):] \quad $(\forall {I}, x,v)$ \quad   $d^2_{v,v} \bL ({I},x,v) > 0 \,$.

\item[(L2):] \quad $(\forall {I}, x,v)$ \quad   $\dfrac {\partial \bL}{\partial I} ({I},x,v) > 0\,$.
\end{description}
%
%
We need two supplementary conditions, to be satisfied locally in $I\in (-J,J)$ for any constant $J>0$:
\begin{description}
\item[(L3):] There exist a constant $C_{\Theta}$, and a super-linear function $\Theta:\R_+\to \R_+$ so that
\begin{equation*}
(\forall {I}, x,v)\in (-J,J)\times \R^d \times \R^d \quad  L( {I}, x,v) \geq \Theta(|v|) -C_{\Theta}\, ,\quad \lim_{r\to +\infty} \dfrac{\Theta(r)}{r} = +\infty\, .
\end{equation*}
\item[(L4):]  For each $K >0$, there exists positive constants $\alpha_K, \beta_K$ such that
\begin{equation*}
(\forall I,x,v)\in (-J,J)\times B(0,K) \times \R^d \quad  |\Dx L( {I}, x,v)| \leq  \alpha_K  +  \beta_K L( {I}, x,v) \,.
\end{equation*}

\end{description}

Finally, we assume the initial data $g$ to be   locally Lipschitz continuous,  non-negative and coercive:
\begin{description}

\item[(G):] \quad $\displaystyle g \in W^{1,\infty}_{loc}(\mathbb{R}^d)\,, \quad   \min_{x\in \R^d} g(x) = 0\, , \quad \text{ and }\quad \lim_{|x| \to \infty} g(x) = +\infty.$

\end{description}
\begin{remark}\label{rem:A3}
The super-linearity in {\rm (L3)} holds true pointwise in $(I,x)$ by the very definition of the Legendre transform \eqref{eq:legendre}. The main point here is the uniformity with respect to $x$. 
\end{remark}

\begin{theorem}\label{th}
Under the assumptions {\rm (L1)}-{\rm (L4)}, suppose that ${I}_1$ and ${I}_2$ are two   non-negative  $BV$ functions associated with two variational solutions  $V_1$ and $V_2$  of \eqref{eq:Hopf Lax} with the same initial data  $g$ satisfying {\rm (G)}. Then, ${I}_1$ and  ${I}_2$ coincide almost everywhere, and so do $V_1$ and $V_2$. 
\end{theorem}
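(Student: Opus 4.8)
\section*{Proof proposal}

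The plan is to argue by contradiction: suppose $I_1 \ne I_2$ on a set of positive measure, and derive a violation of the constraint $\min V(t,\cdot)=0$ for one of the two solutions. The natural first step is to compare the variational solutions pointwise. If $I_1 \le I_2$ a.e.\ on $(0,t)$, then by monotonicity (L2), for every curve $\gamma$ we have $\int_0^t L(I_1(s),\gamma,\dot\gamma)\,ds \le \int_0^t L(I_2(s),\gamma,\dot\gamma)\,ds$, hence $V_1(t,x) \le V_2(t,x)$ for all $x$; and the inequality is strict wherever the sets $\{I_1 < I_2\}$ have positive measure, \emph{provided} the optimal trajectory for $V_1$ actually spends positive time in that set. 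So the real content is a quantitative comparison principle showing that a strict gain on a positive-measure time set forces $\min V_1(t,\cdot) > 0$, contradicting the constraint.

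To make this work I would introduce the first time the two multipliers genuinely separate. Let $t_0 = \inf\{t : \int_0^t (I_2-I_1)^+ \,ds > 0 \text{ or } \int_0^t (I_1-I_2)^+\,ds > 0\}$; by the $BV$ hypothesis $I_1,I_2$ have one-sided limits everywhere, so WLOG on a small interval $(t_0,t_0+\epsilon)$ we have $I_1 < I_2$ (say $I_1(t_0^+) < I_2(t_0^+)$, using $BV$ to control a whole interval, possibly after relabeling and using the macros \mL, \mR, \geps that the paper has set up for exactly this localization). On $(0,t_0)$ the two problems coincide, so $V_1(t_0,\cdot) = V_2(t_0,\cdot) =: w$, and both have $\min w = 0$. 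Now restart the variational problem from time $t_0$ with data $w$: for $t \in (t_0, t_0+\epsilon)$,
\begin{equation*}
V_i(t,x) = \inf_{\gamma(t)=x} \left\{ \int_{t_0}^t L(I_i(s),\gamma(s),\dot\gamma(s))\,ds + w(\gamma(t_0)) \right\}.
\end{equation*}
Take $\bar x$ a minimizer of $V_1(t,\cdot)$ and let $\gamma_1$ be an optimal curve (existence from coercivity (L3) and standard Tonelli theory, using (G) to bound $\gamma_1(t_0)$ in a fixed ball $B(0,K)$, whence (L4) applies along the curve). Then
\begin{equation*}
0 = \min V_1(t,\cdot) = \int_{t_0}^t L(I_1(s),\gamma_1,\dot\gamma_1)\,ds + w(\gamma_1(t_0)) \ge V_2(t,\bar x) - \int_{t_0}^t \big(L(I_2(s),\gamma_1,\dot\gamma_1) - L(I_1(s),\gamma_1,\dot\gamma_1)\big)\,ds,
\end{equation*}
so $\min V_2(t,\cdot) \le V_2(t,\bar x) \le -\int_{t_0}^t \partial_I L(\xi_s,\gamma_1,\dot\gamma_1)(I_2(s)-I_1(s))\,ds < 0$ once $t-t_0$ is chosen inside the interval where $I_2 - I_1$ is bounded below by a positive constant on a positive-measure subset — strict by (L2), since $\partial_I L > 0$ and the integrand $|\dot\gamma_1|$ is controlled by (L3) so $\partial_I L$ does not degenerate. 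This contradicts $\min V_2(t,\cdot) = 0$.

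The main obstacle I anticipate is the last degeneracy point: one needs a \emph{uniform} positive lower bound on $\partial_I L(I,x,v)$ along the competing trajectory, and a priori $|\dot\gamma_1(s)|$ could be large on the bad time set, making the comparison $L(I_2,\cdot) - L(I_1,\cdot) = \partial_I L \cdot (I_2-I_1)$ hard to bound below by something strictly positive. This is where (L3) and (L4) earn their keep: (L3) forces the optimal $\gamma_1$ to have $\int |\dot\gamma_1|^2$-type energy controlled by $V_1(t,\bar x) + C \approx C$, hence $\gamma_1$ stays in a fixed compact set and $\dot\gamma_1$ is $L^1$-bounded on $(t_0,t)$, so the set of times where $|\dot\gamma_1|$ is huge has small measure; on the complement $\partial_I L$ is bounded below by a positive constant (continuity of $\partial_I L$ on compacts, together with (L2)). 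Balancing these — choosing $\epsilon$, then the positive-measure bad subset, then a further subset where the velocity is moderate — is the delicate quantitative step. A secondary technical point is justifying that $V_1(t_0,\cdot)=V_2(t_0,\cdot)$ genuinely holds (dynamic programming / concatenation of curves) and that the restarted infimum formula is valid; this is routine for measurable-in-time Lagrangians but should be stated as a lemma. Finally one notes the argument is symmetric in the roles of $1$ and $2$, so in fact $I_1 = I_2$ a.e., and then $V_1 = V_2$ is immediate from the formula \eqref{eq:Hopf Lax}.
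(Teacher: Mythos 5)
Your framework is on the right track — the $t_0$ relabeling, the use of coercivity to bound minimizing curves, the comparison of the two variational solutions via a common competing trajectory — but there is a genuine gap that the paper's proof is built specifically to overcome.

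The crucial missing idea is that \emph{$BV$ regularity does not give a definite sign for $I_1 - I_2$ on any interval $(t_0, t_0 + \epsilon)$}. You write ``WLOG on a small interval $(t_0,t_0+\epsilon)$ we have $I_1 < I_2$ (say $I_1(t_0^+) < I_2(t_0^+)$, using $BV$ to control a whole interval).'' But nothing in the definition of $t_0$ forces the one-sided limits to differ: one can have $I_1(t_0^+) = I_2(t_0^+)$ while $\mu := I_2 - I_1$ is a $BV$ function that changes sign on every right neighbourhood of $t_0$ (e.g.\ $\mu$ taking the value $(-1)^n/n^2$ on $(\tfrac{1}{n+1},\tfrac1n)$ with $t_0=0$). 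In that case there is no $\epsilon$ on which $\mu$ has a sign, and the whole plan — gain a strictly signed quantity in the Lagrangian comparison — does not even get started. The paper's proof is constructed precisely to sidestep this: instead of a sign on $\mu$, it derives the two weighted integral inequalities
$\int_0^t \phi_1(t,s)\,\mu(s)\,ds \le 0$ and $\int_0^t \phi_2(t,s)\,\mu(s)\,ds \ge 0$ with positive weights $\phi_i \ge \lambda > 0$, and then closes the argument by an integration-by-parts/Gronwall step: the boundary terms are $\phi_i(t,t^-)\int_0^t \mu$, and the bulk terms are controlled by $[\phi_i(t,\cdot)]_{BV(0,t)} \cdot \sup_{s<t}\bigl|\int_0^s\mu\bigr|$, which is where the hard analytic work (Lemma~\ref{lem:regularity} giving $BV$ regularity of $\dot\gamma^t$, and Lemma~\ref{lem:phi_BV_tau} giving $[\phi_i(t,\cdot)]_{BV(0,t)}\to 0$ as $t\to 0^+$) is actually used. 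That smallness of the $BV$ seminorm is what replaces the sign assumption you invoke; without it there is no contradiction.

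A secondary issue: even granting a sign, your concluding inequality has the comparison pointed the wrong way. Starting from the $V_1$-minimizer $\bar x$ with optimal curve $\gamma_1$, the correct deduction is $V_2(t,\bar x) \le \int_{t_0}^t \partial_I L\,(I_2 - I_1)\,ds$, which is \emph{positive} when $I_1 < I_2$ and therefore not a contradiction with $\min V_2 = 0$. To land a contradiction under $I_1 < I_2$ you must test the $V_2$-optimal curve in the $V_1$ variational problem (getting $V_1 < 0$), not the other way around; the paper records both directions, one giving \eqref{eq:ineq1} and the other \eqref{eq:ineq2}, and it is the interplay of the two signed inequalities with the Gronwall estimate that replaces your single pointwise comparison. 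Finally, a small remark: the macros \verb|\mL|, \verb|\mR|, \verb|\geps| are defined but never used in the paper; they are not a hidden localization device.
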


To make the connection with viscosity solutions, we  state the following  auxiliary result:
\begin{theorem}\label{prop:viscosity}
Under the assumptions {\rm (L1)}-{\rm (L4)}, given a function ${I}(t)$ of bounded variation, the variational solution $V(t,x)$ given by \eqref{eq:Hopf Lax} is the unique locally Lipschitz viscosity solution of \eqref{eq:HJ}  over $[0,T)\times \R^d$. 
\end{theorem}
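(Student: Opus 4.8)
The plan is to prove Theorem~\ref{prop:viscosity} in two halves: (i) show that the variational solution $V$ defined by \eqref{eq:Hopf Lax} \emph{is} a locally Lipschitz viscosity solution of the PDE in \eqref{eq:HJ} (with the given $I(t)$ as a datum), and (ii) show that it is the \emph{unique} such solution via a comparison principle for the time-measurable Hamilton--Jacobi equation $\partial_t u + H(I(t),x,d_xu)=0$. Since $I(t)$ is $BV$, hence bounded and measurable, this puts us squarely in the Ishii / Lions--Perthame framework \cite{Ishii,LP87} for Hamilton--Jacobi equations with measurable time dependence, and the constraint $\min_x V(t,\cdot)=0$ plays no role in this auxiliary statement: it is simply carried along, to be verified separately once it is known that $I$ came from an actual solution.

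For the existence half, I would first establish the basic regularity of $V$. Using (L3) (uniform superlinearity) together with the coercivity of $g$ from (G), minimizing curves $\gamma$ in \eqref{eq:Hopf Lax} can be confined to a compact set depending only on $t$, $x$ and an a priori bound on $V$; standard Tonelli-type arguments then give existence of minimizers and a dynamic programming principle
\begin{equation}\label{eq:DPP}
V(t,x) = \inf_{\gamma(t)=x}\left\{\int_s^t L(I(\tau),\gamma(\tau),\dot\gamma(\tau))\,d\tau + V(s,\gamma(s))\right\},\qquad 0\le s\le t.
\end{equation}
Condition (L4) — the linear-in-$L$ bound on $|d_xL|$ — is the key quantitative input for local Lipschitz continuity in $x$: it lets one perturb a near-optimal curve horizontally and control the change in the action by Gronwall's inequality, yielding a local Lipschitz constant in $x$ that is uniform on compact time intervals. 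Lipschitz continuity in $t$ follows from \eqref{eq:DPP} by inserting constant and straight-line curves and using (L3) plus the bound on $L$ over compact velocity sets. Once $V$ is locally Lipschitz, the fact that it satisfies $\partial_t V + H(I(t),x,d_xV)=0$ in the viscosity sense (almost every $t$) is the classical verification argument from \eqref{eq:DPP}: the subsolution inequality comes from freezing $\gamma$ to be the optimal trajectory near a test point, and the supersolution inequality from choosing $\gamma$ to flow along $-d_pH$; the measurable-in-$t$ version of these computations is exactly what Ishii and Lions--Perthame set up, so I would cite it rather than redo it.

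For the uniqueness half, I would invoke the comparison principle for viscosity sub/supersolutions of $\partial_t u + H(I(t),x,d_xu)=0$ on $[0,T)\times\R^d$ with measurable $t$-dependence. The standard device is to introduce the absolutely continuous function $\Phi(t)=\int_0^t H(I(\tau),\cdot,\cdot)$-type time change, or equivalently to work with the notion of viscosity solution adapted to measurable Hamiltonians (test functions of the form $\phi(t,x)+\int_0^t \lambda(\tau)\,d\tau$), reducing matters to a continuous-in-time comparison argument; the doubling-of-variables technique with the usual quadratic penalization in $x$ then applies because $H(I,\cdot,\cdot)\in\mathcal C^2$ is uniformly continuous on the relevant compact sets and, by (H1)/(L1), convex and coercive in $p$. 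Coercivity of $g$ and (L3) guarantee that $V$ grows at infinity, so one can restrict the comparison to a large ball and absorb boundary terms — this handles the unbounded domain $\R^d$. The conclusion is that any two locally Lipschitz viscosity solutions with the same $I$ and the same initial data $g$ coincide, and in particular each equals $V$.

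The main obstacle I anticipate is not the comparison principle per se — that machinery is well documented — but the \emph{uniformity} issues flagged in Remark~\ref{rem:A3}: making the local Lipschitz bounds on $V$ genuinely uniform in $x$ over compact sets (and uniform on $[0,T-\delta]$) in the presence of only the one-sided, $L$-dependent gradient control (L4), rather than an honest global Lipschitz bound on $L$. Concretely, one must show the a priori bound $|V(t,x)|\le C(t,|x|)$ first, then bootstrap: the confinement of minimizers to compacts, the Lipschitz estimate, and the value bound are mutually dependent and have to be closed simultaneously via a Gronwall argument that exploits (L4) in the exponential-integrating-factor form $\tfrac{d}{d\tau}\big(e^{-\beta_K\tau}(\text{action along a shifted curve})\big)$. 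Getting these constants to depend only on the compact set and not on the particular near-minimizer, and checking that the resulting $V$ indeed satisfies the dynamic programming principle with equality, is the part I would write most carefully; the rest is standard viscosity-solution theory.
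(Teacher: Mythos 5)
Your plan and the paper's proof diverge at the heart of the argument. You propose the classical two-step route: (i) verify that $V$ is a locally Lipschitz viscosity solution via the dynamic programming principle, and (ii) conclude uniqueness by a doubling-of-variables comparison principle for the time-measurable Hamiltonian. The paper deliberately avoids step (ii). Instead it proves two inequalities directly against the variational solution: every locally Lipschitz a.e.\ subsolution lies below $V$ (an argument in the spirit of Fathi, integrating the a.e.\ inequality along straight-line curves chosen by Fubini and concluding by density of piecewise linear curves), and every locally Lipschitz viscosity supersolution in the sense of Definition~\ref{def:supersol} lies above $V$. This second half is the real content, and it does \emph{not} use penalization: it reformulates the supersolution inequality as non-emptiness of the intersection between the contingent cone $-\mathcal{T}_{\epi u}(t,x,u(t,x))$ and the set $\{1\}\times\mathcal{E}_{t,x}$, first for the convex closure via the separation theorem, then removes the convex closure by Aubin's viability theorem, and finally builds a backward trajectory by a supremum argument (first for Lipschitz $I$, then for $BV$ $I$ by a monotone Moreau--Yosida approximation from below, using (H2)).

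The gap in your proposal is precisely the step you treat as ``well documented.'' A doubling-of-variables comparison principle here would have to cope simultaneously with: a Hamiltonian that is only measurable in $t$; an unbounded domain $\mathbb{R}^d$ on which the competitors are merely locally Lipschitz (hence potentially unbounded, so the ``restrict to a large ball and absorb boundary terms'' step requires a priori growth control on the \emph{arbitrary} solution $u$, not only on $V$); and a structure condition in $x$ that is not of the usual form $|H(I,x,p)-H(I,y,p)|\le\omega(|x-y|(1+|p|))$, but rather (L4), a bound on $|d_xL|$ by $L$ itself on the Lagrangian side. The paper is explicit that it could not find a reference containing precisely this statement and therefore adapts Dal Maso--Frankowska; your proposal would need to prove a comparison principle in this setting essentially from scratch, and the ingredients (L1)--(L4) are not obviously sufficient for the standard penalization argument. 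The variational route handles all three difficulties at once because it operates pointwise along backward characteristics rather than globally in $(t,x)$; this is what the viability-theorem step buys you, and it is the piece your outline is missing.
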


The reason for separating these two results is to emphasize the use of the variational formulation in our proof. It would be of considerable interest to by-pass the variational formulation and derive uniqueness from PDE arguments only.    Uniqueness of unbounded solutions generally requires stringent conditions on the growth of the solution and the Hamiltonian \cite{Barles,CL87}, but here this issue is mediated by the fact that the Hamiltonian function is convex, and the solution is nonnegative by definition. We could not find a reference containing precisely Theorem \ref{prop:viscosity}, but \cite{dalmasofrankowska} is close, and we adapt their proof to our context in the Appendix.

\subsection{Examples}\label{sec:examples}

First, we apply our result to the special case presented in Section \ref{sec:motivation}. 

\begin{corollary} 
Consider the problem \eqref{eq:HJa}  with the initial data $g$ as in {\rm (G)}. Assume that 
$R  \in \mathcal{C}^2( \mathbb{R}_+ \times \mathbb{R}^d)$ satisfies 
\begin{equation}\label{eq:condition R}
(\forall {I}, x)\quad  \dfrac {\partial R}{\partial I} (I,x) <0 
\quad \text{and} \quad (\forall I)\quad \sup_{x\in \R^d} R(I,x) < +\infty\, ,
\end{equation}
%
then the solution pair $(u,I)$ to the constrained Hamilton-Jacobi equation \eqref{eq:HJa} is unique,  in the class of locally Lipschitz viscosity solutions $u$, and $BV$ functions $I$.
\end{corollary}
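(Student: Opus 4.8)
The plan is to verify that the Corollary follows from Theorem~\ref{th} by checking that the Hamiltonian $H(I,x,p) = R(I,x) + |p|^2$ satisfies the hypotheses of the theorem, translated through the Legendre transform into conditions {\rm (L1)}--{\rm (L4)} on the Lagrangian. First I would compute the Legendre transform explicitly: since $H$ is the sum of the $I,x$-dependent term $R(I,x)$ and the quadratic term $|p|^2$, one has $L(I,x,v) = \frac{1}{4}|v|^2 - R(I,x)$. This makes all four conditions transparent to check. Condition {\rm (L1)}, strict convexity in $v$, holds because $d^2_{v,v}L = \frac12 \Id > 0$. Condition {\rm (L2)}, monotonicity $\partial_I L > 0$, is precisely the assumption $\partial_I R < 0$ in \eqref{eq:condition R}. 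Condition {\rm (L3)} requires a uniform (in $x$) superlinear lower bound $L(I,x,v) \geq \Theta(|v|) - C_\Theta$ for $I$ in any bounded interval $(-J,J)$; here one can take $\Theta(r) = \frac14 r^2$ and $C_\Theta = \sup\{R(I,x) : |I| < J,\, x \in \R^d\}$, which is finite by the second part of \eqref{eq:condition R} (note $R$ is continuous, hence locally bounded in $I$, and bounded above uniformly in $x$ for each $I$; one should check the sup over the compact interval $[-J,J]$ is still finite, which follows from upper semicontinuity of $I \mapsto \sup_x R(I,x)$ or a direct Dini-type argument --- this is the one place that needs a line of care).

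Condition {\rm (L4)} requires $|\Dx L(I,x,v)| \leq \alpha_K + \beta_K L(I,x,v)$ for $x \in B(0,K)$, $I \in (-J,J)$; since $\Dx L = -\Dx R(I,x)$ is independent of $v$ and $R \in \mathcal C^2$, the quantity $|\Dx R(I,x)|$ is bounded on the compact set $[-J,J] \times \overline{B(0,K)}$ by a constant, call it $\alpha_K$, and then the inequality holds with $\beta_K = 1$ since $L \geq -\sup R \geq$ a constant --- more carefully, one wants $L$ bounded below on the relevant set, which again follows from {\rm (L3)} after absorbing constants, so {\rm (L4)} holds with $\alpha_K$ the stated sup and any $\beta_K$, or even $\beta_K = 0$. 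The initial data hypothesis {\rm (G)} is assumed directly in the Corollary's statement.

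With {\rm (L1)}--{\rm (L4)} in hand, Theorem~\ref{th} gives uniqueness of the pair $(V, I)$ among $BV$ functions $I$ and their associated variational solutions, and Theorem~\ref{prop:viscosity} identifies the variational solution $V$ with the unique locally Lipschitz viscosity solution of \eqref{eq:HJa} for that fixed $I$. Combining the two: if $(u_1, I_1)$ and $(u_2, I_2)$ are two locally Lipschitz viscosity solution pairs of \eqref{eq:HJa} with $I_j \in BV$, then by Theorem~\ref{prop:viscosity} each $u_j$ equals the variational solution $V_j$ associated with $I_j$, and each $V_j$ satisfies the constraint $\min V_j(t,\cdot) = 0$; Theorem~\ref{th} then forces $I_1 = I_2$ a.e.\ and $V_1 = V_2$, hence $u_1 = u_2$.

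I do not expect any serious obstacle here: the entire content is a verification exercise, since the heavy lifting is done by Theorems~\ref{th} and~\ref{prop:viscosity}. The only mildly delicate point is confirming that the second condition in \eqref{eq:condition R}, which only asserts finiteness of $\sup_x R(I,x)$ for each \emph{fixed} $I$, upgrades to a bound that is uniform over $I$ in a compact interval --- needed for the constant $C_\Theta$ in {\rm (L3)} and $\alpha_K$ in {\rm (L4)}. This is where I would spend a sentence, invoking continuity of $R$ together with a monotonicity-in-$I$ or equicontinuity argument (for instance, $\partial_I R < 0$ implies $\sup_x R(I,x)$ is nonincreasing in $I$ where it is finite, so on $[-J,J]$ it is controlled by its value at $I = -J$, provided that value is finite --- and one may need to note that $\sup_x R(-J,x) < \infty$ is part of \eqref{eq:condition R} applied at $I=-J$). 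Everything else is immediate.
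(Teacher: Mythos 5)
Your proposal is correct and follows the same route as the paper: compute $L(I,x,v)=\tfrac14|v|^2-R(I,x)$, check {\rm (L1)}--{\rm (L4)} (with the only non-trivial point being the uniform-in-$I$ bound on $\sup_x R(I,x)$ for {\rm (L3)}, which you resolve, as the paper does implicitly, via monotonicity $\partial_I R<0$), and combine Theorems~\ref{th} and~\ref{prop:viscosity}. Two cosmetic points: since $R$ is only defined on $\R_+\times\R^d$ and the $I_j$ are non-negative, the interval should be $[0,J]$ rather than $(-J,J)$, so anchor the monotonicity argument at $I=0$; and {\rm (L4)} asks for positive $\beta_K$, so rather than ``even $\beta_K=0$'' one takes any $\beta_K>0$ and absorbs the lower bound on $L$ into $\alpha_K$.
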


The second condition in \eqref{eq:condition R} is natural from the biological viewpoint as the net growth rate is presumably bounded from above. 
Since the three other conditions {\rm (L1)}, {\rm (L2)} and {\rm (L4)} are straightforward, 
it is sufficient to  verify {\rm (L3)}. Indeed, the Lagrangian $L$ is given by $\frac14 |v|^2 - R(I,x)$. Moreover,  since the range of the $BV$ function $I(t)$ lies in some bounded interval $[0,J]$, it is sufficient to restrict $R$ to $[0,J]\times \mathbb{R}^d$ to address uniqueness over a bounded time interval $(0,T)$. Since it is immediate from  \eqref{eq:condition R} that $R$ is uniformly bounded over $[0,J]\times \R^d$, {\rm (L3)} follows.

Our result also includes relevant examples that were not covered by the previous contributions, particularly non-separable Hamiltonian functions  $H(I,x,p)$. 
For instance, consider the following  quantitative genetics model:
\begin{equation*}
\eps \partial_t n_\epsilon = \int \dfrac{1}{\eps} K\left ( \dfrac{x-x'}{\epsilon} \right ) B(I_\epsilon(t),x') n_\epsilon(t,x') \, dx' -   n_\epsilon D(I_\eps (t),x),
\end{equation*}
where  $I_\eps(t)$ is the same as in \eqref{eq:pop}, and $K$ is a probability distribution function that encodes the mutational effects after reproduction: if the parent has trait $x'$, and gives birth at rate $B(I_\eps,x')$, the trait $x$ of the offspring is distributed following $K_\eps(x-x') = \frac{1}{\epsilon}K\left( \frac{x-x'}{\epsilon}\right)$. Assume that $K$ is symmetric, and has finite exponential moments, and denote by $\mathcal K$ its Laplace transform: $\mathcal K(p) = \int  K(z) \exp(p\cdot z)\, dz$. Then, the limiting problem as $\epsilon\to 0$ is  \eqref{eq:HJ} with the following Hamiltonian function  \cite{BMP}: 
\begin{equation}\label{eq:H exp}
H(I,x,p) = B(I,x) \mathcal{K}(p) - D(I,x)\,. 
\end{equation}


\begin{corollary}\label{cor:genetics}
Consider the problem \eqref{eq:HJ} with the Hamiltonian  \eqref{eq:H exp}, and the initial data $g$ as in {\rm (G)}. Assume that 
$\mathcal K$ is the Laplace transform of a symmetric p.d.f. with finite exponential moments, and that  $B,D \in \mathcal{C}^2( \mathbb{R}_+ \times \mathbb{R}^d)$ are non-negative functions which satisfy the following conditions: $B>0$ everywhere, and $(\forall I)$  $B(I,\cdot)\in L^\infty(\R^d)$, accompanied with the following monotonicity conditions:
\begin{equation*}
\\ 
(\forall {I}, x) \quad  \dfrac {\partial B}{\partial I} (I,x) <0,\, \text{ and }\,  \dfrac {\partial D}{\partial I}  (I,x) > 0\, ,
\end{equation*}
then the solution pair $(u,I)$ to the constrained Hamilton-Jacobi equation \eqref{eq:HJ} is unique,  in the class of locally Lipschitz viscosity solutions $u$, and $BV$ functions $I$.
\end{corollary}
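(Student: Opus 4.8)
To prove Corollary~\ref{cor:genetics}, the plan is to verify that the Hamiltonian $H(I,x,p)=B(I,x)\mathcal{K}(p)-D(I,x)$ satisfies the standing hypotheses {\rm (L1)}--{\rm (L4)} and then invoke Theorems~\ref{th} and~\ref{prop:viscosity}. Since an admissible multiplier $I$ is a non-negative $BV$ function on $(0,T)$, its range is contained in some bounded interval $[0,J]$, and it suffices to check the assumptions with $I$ restricted to $[0,J]$. I would first record elementary facts about $\mathcal{K}(p)=\mathbb{E}[e^{p\cdot Z}]$, where $Z$ has the symmetric density $K$: finiteness of all exponential moments makes $\mathcal{K}$ finite and real-analytic on $\mathbb{R}^d$ (hence $H\in\mathcal{C}^2$); symmetry gives $\nabla\mathcal{K}(0)=\mathbb{E}[Z]=0$ and $\mathcal{K}(0)=1$; Jensen's inequality gives $\mathcal{K}\ge 1$; $d^2_{p,p}\mathcal{K}(p)=\mathbb{E}[(Z\otimes Z)e^{p\cdot Z}]$ is positive definite since $K$ has a density; and, crucially, $\phi:=\log\mathcal{K}$ is convex (it is the cumulant generating function), with $\phi(0)=0$, $\nabla\phi(0)=0$, hence $\phi\ge 0$.

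Assumptions {\rm (H1)}--{\rm (H2)} follow quickly, and by the reformulation recalled in the text they are equivalent to {\rm (L1)}--{\rm (L2)}: indeed $d^2_{p,p}H=B\,d^2_{p,p}\mathcal{K}\succ 0$ since $B>0$, while superlinearity of $p\mapsto H(I,x,p)$ reduces to that of $\mathcal{K}$, which I would prove by contradiction (if $\mathcal{K}(p_n)/|p_n|\le C$ along $|p_n|\to\infty$ with $p_n/|p_n|\to u$, convexity of $\mathcal{K}$ on the segment $[0,p_n]$ forces $\mathcal{K}(Ru)\le 1+RC$ for every $R>0$, contradicting $\mathcal{K}(Ru)=\mathbb{E}[e^{R(u\cdot Z)}]\ge e^{R\delta}\,\mathbb{P}(u\cdot Z>\delta)$ for $\delta$ small enough that $\mathbb{P}(u\cdot Z>\delta)>0$); and $\partial_I H=(\partial_I B)\mathcal{K}-\partial_I D<0$ because $\partial_I B<0$, $\mathcal{K}>0$, $\partial_I D>0$. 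Next I would pass to the Lagrangian: writing $\ell(a,v):=\sup_p\{p\cdot v-a\mathcal{K}(p)\}=a\,\mathcal{K}^*(v/a)$ for $a>0$, one has $L(I,x,v)=D(I,x)+\ell(B(I,x),v)$, with $\ell(a,\cdot)$ finite, strictly convex and $\mathcal{C}^2$ because $\mathcal{K}$ is superlinear and strictly convex; the envelope theorem gives $\partial_a\ell(a,v)=-\mathcal{K}(p^\ast(a,v))\le-\mathcal{K}(0)=-1$, where $p^\ast(a,v)$ is the unique maximizer, so $a\mapsto\ell(a,v)$ is decreasing. For {\rm (L3)}: since $\partial_I B<0$ we get $B(I,x)\le B(0,x)\le\|B(0,\cdot)\|_{L^\infty}=:\bar B_0<\infty$, hence by monotonicity $L(I,x,v)\ge\ell(\bar B_0,v)=(\bar B_0\mathcal{K})^*(v)=:\Phi_0(v)$, which is finite, convex and superlinear (the conjugate of a finite superlinear function); then $\Theta(r):=\bar B_0+\inf_{|v|=r}\Phi_0(v)$ is non-negative and superlinear and $L\ge\Theta(|v|)-\bar B_0$.

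The main step is {\rm (L4)}. Restricting $x\in\overline{B(0,K)}$, on the compact set $[0,J]\times\overline{B(0,K)}$ one has $0<\underline B\le B\le\bar B<\infty$ and $|d_xB|\le C_1$, $|d_xD|\le C_2$, and the envelope identity $d_xL=-\mathcal{K}(p^\ast)\,d_xB+d_xD$ gives $|d_xL|\le C_1\mathcal{K}(p^\ast)+C_2$. It remains to control $\mathcal{K}(p^\ast)$ by $L$, and this is where log-convexity enters. From the first-order condition $v=B\,\mathcal{K}(p^\ast)\nabla\phi(p^\ast)$ and the convexity inequality $p^\ast\cdot\nabla\phi(p^\ast)\ge\phi(p^\ast)$, one gets
\[
\ell(B,v)=B\big(p^\ast\!\cdot\!\tfrac{v}{B}-\mathcal{K}(p^\ast)\big)\ge B\,\mathcal{K}(p^\ast)\big(\phi(p^\ast)-1\big).
\]
If $\phi(p^\ast)\ge 2$ this yields $\mathcal{K}(p^\ast)\le\ell(B,v)/\underline B\le L/\underline B$; if $\phi(p^\ast)<2$ then $\mathcal{K}(p^\ast)=e^{\phi(p^\ast)}<e^2$. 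In both cases $\mathcal{K}(p^\ast)\le(e^2+\bar B/\underline B)+L/\underline B$ (using $L\ge-\bar B$ on the compact set), so $|d_xL|\le\alpha_K+\beta_K L$ with $\alpha_K,\beta_K>0$ depending on $J$ and $K$; this is {\rm (L4)}.

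Having verified {\rm (L1)}--{\rm (L4)}, Theorems~\ref{th} and~\ref{prop:viscosity} immediately give uniqueness of the pair $(u,I)$ in the stated classes. I expect the only genuinely delicate point to be {\rm (L4)}, and within it the estimate $\mathcal{K}(p^\ast)\lesssim 1+L$: because $p$ and $x$ are coupled inside $H$ through the factor $B(I,x)$, controlling $d_xL$ requires controlling $\mathcal{K}$ at the optimal momentum, and the argument above hinges on the log-convexity of $\mathcal{K}$ — a structural feature of Laplace transforms with no analogue for general convex Hamiltonians.
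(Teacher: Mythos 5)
Your proof is correct, and the overall scaffolding is the same as the paper's (reduce $I$ to a bounded range $[0,J]$, verify {\rm (L1)}--{\rm (L4)}, invoke Theorems~\ref{th} and~\ref{prop:viscosity}). The decisive step in both proofs is {\rm (L4)}, and both boil down to bounding the Hamiltonian at the optimal momentum, $\mathcal{K}(p^\ast)$, linearly in terms of $L$ (via the envelope identity $\Dx L=-\mathcal{K}(p^\ast)\,\Dx B+\Dx D$). Where you diverge from the paper is the mechanism used to obtain that bound. The paper proves the pointwise inequality $\Dv\mathcal{L}(v)\cdot v\leq 2(1+\mathcal{L}(v))$ via $\cosh X\leq 1+\tfrac12 X\sinh X$, which relies explicitly on the symmetry of $K$; combined with the duality identity $\mathcal{K}(p^\ast)=\Dv\mathcal{L}\cdot v-\mathcal{L}$, this gives $\mathcal{K}(p^\ast)\leq 2+\mathcal{L}$. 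You instead use log-convexity of the Laplace transform: with $\phi=\log\mathcal{K}$ convex and $\phi(0)=0$, the gradient inequality $p^\ast\cdot\nabla\phi(p^\ast)\geq\phi(p^\ast)$ plus the first-order optimality condition gives $\ell(B,v)\geq B\,\mathcal{K}(p^\ast)(\phi(p^\ast)-1)$, from which the case split $\phi(p^\ast)\lessgtr 2$ yields $\mathcal{K}(p^\ast)\leq C+L/\underline{B}$. The two bounds are essentially equivalent in strength, and your {\rm (L3)} verification via $L\geq(\bar B_0\mathcal{K})^*(v)$ is equally sound (the paper takes $\Theta=1+\mathcal{L}$, also exploiting monotonicity of $L$ in $B$). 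Your approach is a bit more conceptual --- log-convexity is a structural property of all moment generating functions and does not use symmetry of $K$ at all, whereas the paper's $\cosh/\sinh$ inequality does --- so it would extend more readily to asymmetric mutation kernels; the paper's computation is more elementary and self-contained.
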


\begin{proof}
There are a few items to check in order to apply Theorem \ref{th}. 
Firstly, the Hamiltonian function $H$ \eqref{eq:H exp} clearly verifies {\rm (H1)} and {\rm (H2)}, hence {\rm (L1)} and {\rm (L2)} follows. Secondly,  the Lagrangian function $L$ associated with the Hamiltonian $H$ \eqref{eq:H exp} is 
\begin{equation*}
L(I,x,v) = B(I,x)  \mathcal L\left ( \dfrac{v}{B(I,x)} \right ) + D(I,x)\, ,
\end{equation*}
where $\mathcal{L}(v)$ is the Legendre transform of $\mathcal{K}(p)$. Let $I_1,I_2$ be the two functions that are involved in the uniqueness test. Let  $[0,J]$ be a bounded interval in which both $I_1$ and $I_2$ take values. Condition {\rm (L3)} is clearly verified with $\Theta = 1 + \mathcal L$,  
because $L$ is decreasing with respect to the value $B$, $\sup_{[0, J]\times \R^d} B<+\infty$, and $\mathcal L$ satifies {\rm (L3)} automatically (see Remark \ref{rem:A3}).
The justification of {\rm (L4)} requires more work. 
We begin with the following inequality: 
\begin{equation}\label{eq:DL quad}
 \Dv\mathcal L(v) \cdot v \leq 2\left ( 1 + \mathcal L(v)\right )\, .
\end{equation}
To derive it, consider the following pointwise inequality: for all $X\in \R$, $\cosh X \leq 1 + \frac12 X \sinh X$, which in turn implies the following one by symmetry  of $K$:
\begin{multline*}
\mathcal K(p) =  \int K(z) \cosh(p\cdot z)\,dz \\\leq 1 + \frac12 \int K(z)  (p\cdot z) \sinh (p\cdot z)\, dz = 1 + \frac12 \int K(z)  (p\cdot z) \exp(p\cdot z)\, dz  \, .   
\end{multline*}
By applying this estimate to $ p = \Dv \mathcal L(v) \Leftrightarrow v = d_p \mathcal K(p)$ in the dual Legendre transformation $\mathcal{L}\leftrightarrow\mathcal{K}$, we deduce,  by the fact $\mathcal{K}(p) + \mathcal{L}(v) \geq p \cdot v$, that
\begin{equation*}
\Dv \mathcal L(v) \cdot v - \mathcal L (v)\leq \mathcal{K}(p)  \leq 1 + \frac12 \Dv \mathcal L(v) \cdot v\, . 
\end{equation*}
This yields the simple estimate announced in \eqref{eq:DL quad}.

The technical condition {\rm (L4)} is reformulated in this context as follows, after division by $B>0$: 
\begin{multline*}
\left | \Dx (\log B)    \mathcal L\left ( \dfrac{v}{B} \right ) - \Dx (\log B) \left ( \Dv \mathcal L\left ( \dfrac{v}{B} \right ) \cdot \dfrac{v}B \right ) + \dfrac{\Dx D}{B}  \right | 
%
%
%
\\  \alpha_K \dfrac{1}{B} +     
 \beta_K \left (\mathcal L\left ( \dfrac{v}{B } \right ) + \dfrac{D}{B} \right ) \, . 
\end{multline*}
%
It is indeed guaranteed for a suitable choices of $\alpha_K,\beta_K$. The main arguments besides \eqref{eq:DL quad} are: both $|\Dx (\log B)|$ and $|d_xD/B|$ are locally uniformly bounded from above,  $1/B$ is locally uniformly bounded from below, and $D$ is non-negative.
\end{proof}

\paragraph{Acknowledgment.} This work was initiated as the first author was visiting Ohio State University.  VC has funding from the European Research Council (ERC) under the European Union’s Horizon 2020 research and innovation programme (grant agreement No 639638). KYL is partially supported by the National Science Foundation under grant DMS-1411476.

\section{Regularity of the minimizing curves}

The purpose of this section is to establish $BV$ regularity  of the derivative $\dot \gamma$ of any minimizing curve $\gamma$ in \eqref{eq:Hopf Lax}. Such regularity is crucial in our argument of uniqueness. 
First, we establish the following  consequence of the convexity and the super-linearity of the Lagrangian: 

\begin{lemma}\label{lem:coercivity}
Assume {\rm (L1)} and {\rm (L3)}, then
\begin{equation*}
(\forall {I},x)\quad \lim_{|v|\to +\infty} \dfrac{\Dv L({I},x,v)\cdot v}{|v|} = +\infty\, ,
\end{equation*}
and the limit is uniform over $({I},x)$ lying in compact subsets of $\mathbb{R}^{d+1}$. 
\end{lemma}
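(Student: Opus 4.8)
The plan is to exploit convexity of $v \mapsto L(I,x,v)$ together with the uniform super-linear lower bound from (L3). The key elementary fact is the subgradient inequality: since $L$ is convex (in fact strictly, by (L1)) in $v$, for any $v$ and any reference point $w$ we have
\[
L(I,x,w) \geq L(I,x,v) + \Dv L(I,x,v)\cdot (w - v).
\]
Rearranging gives $\Dv L(I,x,v)\cdot v \geq L(I,x,v) - L(I,x,w) + \Dv L(I,x,v)\cdot w$. The idea is to choose $w$ cleverly (e.g.\ $w=0$, or $w = -\lambda v/|v|$ for a parameter $\lambda$ to be optimized) so that the right-hand side is bounded below by something super-linear in $|v|$.

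First I would take $w=0$ to get the crude bound $\Dv L(I,x,v)\cdot v \geq L(I,x,v) - L(I,x,0)$. By (L3), $L(I,x,v) \geq \Theta(|v|) - C_\Theta$ with $\Theta$ super-linear, and $L(I,x,0)$ is bounded uniformly on compact sets of $(I,x)$ by continuity of $L$ (which is $\mathcal C^2$). Hence $\Dv L(I,x,v)\cdot v \geq \Theta(|v|) - C_\Theta - \sup_{\mathrm{cpt}} L(I,x,0)$, and dividing by $|v|$ and letting $|v|\to\infty$ yields the claim, with uniformity over compact $(I,x)$ coming directly from the uniformity built into (L3) and the local boundedness of $L(\cdot,\cdot,0)$. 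Actually the $w=0$ choice already suffices, so I would not bother with the more refined choice of $w$; the only subtlety is confirming that $\Dv L(I,x,v)\cdot v \geq L(I,x,v) - L(I,x,0)$ does not require differentiability at points other than $v$ — and it does not, since the subgradient inequality holds for the genuine gradient $\Dv L(I,x,v)$ against the arbitrary point $0$.

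I do not anticipate a real obstacle here; the statement is essentially a packaging of "convex plus uniformly super-linear implies the directional derivative grows super-linearly." The one point that deserves a sentence of care is the \emph{uniformity} of the limit over compact $(I,x)$: this must be traced to the fact that (L3) provides a \emph{single} super-linear $\Theta$ and constant $C_\Theta$ valid for all $(I,x)$ in the relevant range, combined with the uniform bound on $L(I,x,0)$ over compacta. With $w=0$ the lower bound $\Theta(|v|) - C_\Theta - \sup_{(I,x)\in \mathcal C} L(I,x,0)$ is manifestly independent of $(I,x)\in\mathcal C$, so the convergence of the quotient to $+\infty$ is uniform, completing the proof.
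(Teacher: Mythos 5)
Your proof is correct and is essentially the same as the paper's: the subgradient inequality with reference point $w=0$, namely $\Dv L(I,x,v)\cdot v \geq L(I,x,v) - L(I,x,0)$, is precisely the radial bound $\Dv L(I,x,re)\cdot e = \frac{d}{dr}L(I,x,re) \geq \frac{L(I,x,re)-L(I,x,0)}{r}$ used there (with $v=re$), and the remaining steps—invoking (L3) for the uniform lower bound and the continuity of $L(\cdot,\cdot,0)$ for a uniform upper bound on compacta—match the paper's argument.
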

\begin{proof}
Fix $K>0$  and let $(I,x) \in [-K,K]\times B(0,K)$.
Let $M>0$ be given, and choose $r_0= r_0(K)>0$ large enough so that for all $r \geq r_0$, 
\begin{equation*}
\Theta(r) - C_\Theta -  L({I},x,0)  \geq Mr \, . 
\end{equation*}
Then we have, for 
$r \geq r_0$ and $e \in \mathbb{S}^{d-1}$,
$$
L({I},x,re) - L({I},x,0) \geq Mr \, .
$$
By convexity of $L$ in $v$, we have, for all $e \in \mathbb{S}^{d-1}$ and $r\geq r_0$,
$$
\Dv L({I},x,re)\cdot e = \frac{d}{dr} \big(L({I},x,re)\big)  \geq \frac{L({I},x,re) - L({I},x,0)}{r} \geq M\, .
$$
Therefore, for each $M$ there exists $r_0$ such that 
$
\Dv L({I},x,v)\cdot v \geq M |v|$, provided that $|v| \geq r_0$ and $({I},x) \in [-K,K] \times B(0,K)
$.
\end{proof}

We will now establish the $BV$ estimate of $\dot\gamma^{t,x}$ for $(t,x) \in (0,T)\times \mathbb{R}^d$. 
For the remainder of this section, we fix $T>0$ and $J$ so that $\sup_{(0,T)} |I| \leq J$. For ease of notation, dependence of various constants on $T$ and $J$ will be omitted. 

\begin{lemma}\label{lem:regularity}
For each $K$, there exists a constant $C_{K}$ such that, uniformly for  $(t,x) \in (0,T) \times B(0,K)$,  any minimizing curve $\gamma^{t,x} \in \AC(0,t)$ associated with \eqref{eq:Hopf Lax} satisfies
\begin{equation}\label{eq:BVV_bound}
[\dot\gamma^{t,x}]_{BV(0,t)} \leq C_{K}\left ( t   + [I]_{BV(0,t)}\right )\,.
\end{equation}
\end{lemma}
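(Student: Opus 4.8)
The plan is to derive the Euler--Lagrange equation for a minimizer $\gamma = \gamma^{t,x}$ and extract from it a quantitative bound on the variation of $\dot\gamma$ in terms of the variation of $I$ plus a term linear in $t$. Since $L$ is $\mathcal C^2$ and $\gamma$ minimizes $\int_0^t L(I(s),\gamma(s),\dot\gamma(s))\,ds + g(\gamma(0))$ over curves with fixed endpoint $\gamma(t)=x$, the first-order stationarity condition gives, in integrated (DuBois--Reymond) form,
\begin{equation*}
\Dv L(I(s),\gamma(s),\dot\gamma(s)) = \Dv L(I(0^+),\gamma(0),\dot\gamma(0^+)) + \int_0^s \Dx L(I(\sigma),\gamma(\sigma),\dot\gamma(\sigma))\,d\sigma
\end{equation*}
for a.e.\ $s$, together with the natural (free left endpoint) boundary condition coming from $g\in W^{1,\infty}_{loc}$, which only controls $\Dv L$ at $s=0$ within the (bounded) subdifferential of $g$; this bound, Lemma~\ref{lem:coercivity}, and the coercivity (L3) will be used first to show that $\sup_{[0,t]}|\dot\gamma|\le K'$ for some $K'=K'(K)$, so that from now on all quantities are evaluated in a fixed compact set and (L4) applies. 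The key structural point is that the map $p := \Dv L(I(s),\gamma(s),\dot\gamma(s))$ is what has controlled variation: its jumps and continuous variation are driven, respectively, by the jumps of $I$ (through $\partial_I \Dv L$, which is bounded on compacts since $L\in\mathcal C^2$) and by the integral of $\Dx L$ along $\gamma$.

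Concretely, I would write $p(s) = \Dv L(I(s),\gamma(s),\dot\gamma(s))$ and estimate $[p]_{BV(0,t)}$. Decompose the increment of $p$ over a partition: the part due to varying $I$ at fixed $(\gamma,\dot\gamma)$ is bounded by $(\sup|\partial_I\Dv L|)\,[I]_{BV(0,t)}$ on the relevant compact set; the part due to the Euler--Lagrange integral term is bounded by $\int_0^t |\Dx L(I,\gamma,\dot\gamma)|\,ds$. Now invoke (L4): $|\Dx L| \le \alpha_K + \beta_K L(I,\gamma,\dot\gamma)$ along the minimizer, and $\int_0^t L(I,\gamma,\dot\gamma)\,ds = V(t,x) - g(\gamma(0)) \le V(t,x) \le C_K$ (the variational solution is locally bounded — indeed locally Lipschitz — by the competitor argument with a straight line, using (L3)). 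Hence $\int_0^t|\Dx L|\,ds \le \alpha_K t + \beta_K C_K =: C_K'(t+1)$, and more precisely one gets a bound of the form $C_K\,t$ after absorbing constants, since $V(t,x)\lesssim t$ for the straight-line competitor. Combining, $[p]_{BV(0,t)} \le C_K(t + [I]_{BV(0,t)})$.

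Finally I would transfer the $BV$ bound from $p$ back to $\dot\gamma$. Since $\dot\gamma(s) = \Dv L$-inverse in $v$, i.e.\ $\dot\gamma(s) = \Dx H$-type relation: precisely $\dot\gamma(s) = d_p H(I(s),\gamma(s),p(s))$ because $d_pH$ and $\Dv L$ are reciprocal (as recalled after \eqref{eq:legendre}), and on the compact set where $(I,\gamma,p)$ ranges, $d_pH$ is $\mathcal C^1$ hence Lipschitz in all variables. Therefore $[\dot\gamma]_{BV(0,t)} \le \Lip(d_pH)\big([I]_{BV(0,t)} + [\gamma]_{BV(0,t)} + [p]_{BV(0,t)}\big)$, and $[\gamma]_{BV(0,t)} \le \int_0^t|\dot\gamma|\,ds \le K' t$ from the uniform velocity bound, so the $\gamma$-term is absorbed into $C_K t$. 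This yields \eqref{eq:BVV_bound}. The main obstacle I anticipate is the first step: rigorously justifying the Euler--Lagrange/DuBois--Reymond identity for a merely $AC$ minimizer of a time-measurable ($I$ only $BV$) Lagrangian, and obtaining the a priori $L^\infty$ bound on $\dot\gamma$ — this requires care with the measurable-in-time theory and with the free-endpoint condition at $s=0$; once $\dot\gamma$ is known to be bounded and the EL equation holds in integrated form, the remaining estimates are the routine application of (L4) and the local boundedness of $V$ described above.
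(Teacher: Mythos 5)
Your overall strategy matches the paper's: establish an $L^\infty$ bound on $\dot\gamma$, use the Euler--Lagrange equation to control the costate $p(s) = \Dv L(I(s),\gamma(s),\dot\gamma(s))$, and transfer the bound to $\dot\gamma$ via $\dot\gamma = d_pH(I,\gamma,p)$ together with the chain rule for $BV$ functions. But two points in the sketch do not hold up.

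First, your ``key structural point'' --- that $p$ has jumps driven by the jumps of $I$ --- is wrong. The integrated Euler--Lagrange identity $p(s) = p(\hat s) + \int_{\hat s}^s \Dx L\,d\sigma$ shows that $p$ is absolutely continuous (in fact Lipschitz once $\dot\gamma\in L^\infty$ is known, since $\Dx L$ is then bounded). The jumps of $I$ do not appear in $p$; they reappear only in $\dot\gamma = d_pH(I,\gamma,p)$, precisely because $p$ and $\gamma$ are continuous while $I$ is not. Your decomposition of $[p]_{BV}$ into an ``$I$-part'' plus an ``integral part'' therefore double-counts $[I]_{BV}$. It is harmless for the final inequality but indicates a misreading of which variable carries the discontinuity. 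A related slip: $V(t,x)\lesssim t$ is false (as $t\to 0$, $V(t,x)\to g(x)$); what is true, and what is needed, is $\int_0^t L\,ds = V(t,x)-g(\gamma(0))\le Ct + A\int_0^t|\dot\gamma|\,ds \lesssim t$.

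Second, and more seriously, your route to the $L^\infty$ bound on $\dot\gamma$ --- evaluating $\Dv L$ at the free left endpoint $s=0$ via the transversality condition with $\partial g$ --- is circular: a priori $\dot\gamma$ is only in $L^1$, so $\dot\gamma(0^+)$ need not exist and the transversality condition cannot be read pointwise. This is exactly the difficulty you flag at the end, but you do not resolve it. The paper's resolution is different and is the crux of the lemma: first derive an $L^1$-average bound on $\dot\gamma$ from the straight-line competitor (your Step \#1 is fine up to here), which guarantees a set of positive measure of Lebesgue points $\hat s\in(0,t)$ with $|\dot\gamma(\hat s)|\le 2C'$; then test the distributional Euler--Lagrange equation against mollified indicator functions to transfer a bound on $|\Dv L|$ from $\hat s$ to an arbitrary Lebesgue point $s_2\in(0,t)$; finally apply Lemma~\ref{lem:coercivity} to convert the resulting bound on $\Dv L(\cdot)\cdot\dot\gamma/|\dot\gamma|$ into a bound on $|\dot\gamma|$. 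Without some such device operating in the interior (rather than at the free endpoint), the argument does not close.
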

\begin{proof}
The proof is   divided into three steps, wherein classical arguments are  recalled for the sake of completeness. For the sake of notation, we drop the superscript of $\gamma^{t,x}$, assuming that the pair $(t,x) \in (0,T) \times B(0,K)$ is fixed   throughout the proof.

\noindent\textbf{Step \#1: $L^\infty$ bound on $\gamma$.} 
We deduce immediately the following bound from {\rm (L3)}:
\begin{align}
\int_0^t \Theta(\dot \gamma(s))\, ds - Ct  + g(\gamma(0))  & \leq \int_0^t L({I}(s),\gamma(s), \dot{\gamma}(s))\,ds + g(\gamma(0))\nonumber\\
&\leq  \int_0^t L({I}(s),x,0)\, ds + g(x)\,,\label{eq:H1 bound 0}
\end{align}
from which we deduce a non-optimal $W^{1,1}$ estimate
\begin{align*}
 \int_0^t |\dot \gamma(s)|\, ds & \leq Ct +  \int_0^t L({I}(s),x,0)\, ds + g(x) - g(\gamma(0))   
\,  . 
\end{align*}
by using the super-linearity of $\Theta$ in a crude way, namely, $L  \geq |v| -C$.  Furthermore, we deduce from $g \geq 0$ that $\dot \gamma$ belongs to $L^1(0,t)$, so that 
$$
\|\gamma(s)\|_{L^\infty(0,t)} \leq C'_{K} \quad \text{ provided }(t,x) \in (0,T)\times B(0,K).
$$

%
%


Let  $A=A(g,K)$ be the (local) Lipschitz bound on $g$ in the ball  with radius $C'_{K}$. 
By updating the constant $C$, we can assume that $L( {I}, x,v) \geq (A + 1)|v| -C$. 
Back to \eqref{eq:H1 bound 0}, we deduce that 
\begin{equation*}
(A+1)\int_0^t \left| \dot \gamma(s)\right| \, ds  \leq C t + \int_0^t L({I}(s),x,0)\, ds + A \int_0^t \left| \dot \gamma(s)\right| \, ds \,,
\end{equation*} 
We obtain as a consequence the following updated estimate:
\begin{equation}\label{eq:average dot gamma}
\dfrac1t \int_0^t \left| \dot \gamma(s)\right| \, ds  \leq  C + \dfrac1t \int_0^t L({I}(s),x,0)\, ds \leq C'\, ,
\end{equation}
where the bound $C'$ is uniform for $(t,x)\in (0,T)\times B(0,K)$, and ${I}(s)$ taking values in $[-J,J]$.  

\noindent\textbf{Step \#2: $L^\infty$ bound on $\dot \gamma$.}
We deduce from \eqref{eq:average dot gamma} that there exists a subset $\mathcal S_{t,x}\subset(0,t)$ of positive measure,  such that for all $\hat{s} \in \mathcal{S}_{t,x}$,  $|\dot \gamma(\hat s)| \leq 2C'$.

Since $\gamma$ is a minimizing curve, it satisfies the following Euler-Lagrange condition in the distributional sense: 
\begin{equation}\label{eq:euler lagrange}
- \dfrac{d}{ds}\left ( \Dv L({I}(s), \gamma(s), \dot \gamma(s))  \right ) + \Dx L({I}(s), \gamma(s), \dot \gamma(s)) = 0\, .
\end{equation}

Let $\mathrm{Leb}(f)$ denote the set of Lebesgue points of the function $f$. Let $s_1$ and $s_2$ belong to $\mathrm{Leb}(I)\cap \mathrm{Leb}(\dot \gamma)$. Let $\{\rho_n\}$ be a family of mollifiers. We can test \eqref{eq:euler lagrange} against $\int_0^s (\rho_n(s'-s_2) - \rho_n(s'-s_1)\, ds'$. After integration by parts, we find that:
\begin{equation}\label{eq:EulLag1}
\left |\int_0^t \Dv L({I}(s), \gamma(s), \dot \gamma(s)) \left ( \rho_n(s-s_2) - \rho_n(s-s_1) \right )\, ds\right | \leq  \int_0^t \left |\Dx L({I}(s), \gamma(s), \dot \gamma(s))\right |\, ds \, , 
\end{equation}
where we have used that $|\int_0^s (\rho_n(s'-s_2) - \rho_n(s'-s_1)\, ds'| \leq 1$ for $n$ large enough. 
Using the definition of the Lebesgue points, we find that 
\begin{equation}\label{eq:EulLag2}
\lim_{n\to +\infty}\int_0^t \Dv L({I}(s), \gamma(s), \dot \gamma(s))  \rho_n(s-s_i)\, ds = \Dv L({I}(s_i), \gamma(s_i), \dot \gamma(s_i))\, .
\end{equation}
Then, we can specialize $s_1\in \mathrm{Leb}(I)\cap \mathrm{Leb}(\dot \gamma)\cap \mathcal S_{t,x}$ because the latter has positive measure. We deduce from \eqref{eq:EulLag1}--\eqref{eq:EulLag2}, and the definition of $\mathcal S_{t,x}$ that 
\begin{equation*}
\left | \Dv L({I}(s_2), \gamma(s_2), \dot \gamma(s_2)) \right | \leq C + \int_{0}^{t} \left |\Dx L({I}(s'), \gamma(s'), \dot \gamma(s'))\right |\, ds'\, ,
\end{equation*} 
for all $s_2  \in \mathrm{Leb}(I)\cap \mathrm{Leb}(\dot \gamma)$. Multiplying by the unit vector $\frac{\dot \gamma (s_2)}{|\dot \gamma(s_2)|}$, and using {\rm (L4)}, we find:
\begin{align*}
 \dfrac{\Dv L({I}(s_2), \gamma(s_2), \dot \gamma(s_2))\cdot \dot \gamma (s_2)}{|\dot \gamma(s_2)|}  
&\leq C + \int_{0}^{t} \left |\Dx L({I}(s'), \gamma(s'), \dot \gamma(s'))\right |\, ds' \\
&\leq C +     \int_0^t\left [\alpha_{K'}  + 
\beta_{K'} L({I}(s'), \gamma(s'), \dot \gamma(s'))  
\right]\, ds' 
\, ,
\end{align*}
where $K' =  \|\gamma\|_{L^\infty(0,t)}$. We deduce from  the uniform bound of $\|\gamma\|_{L^\infty(0,t)}$, and the 
minimizing property of $\gamma$ that the  right-hand-side is uniformly bounded for $(t,x,I)\in (0,T)\times B(0,K) \times [-J,J]$.   Hence, 
$$  \dfrac{\Dv L({I}(s), \gamma(s), \dot \gamma(s))\cdot \dot \gamma (s)}{|\dot \gamma(s)|}  \leq C \quad \text{ a.e.  }s \in (0,t),$$
and the boundedness of $\dot \gamma$ is a consequence of Lemma~\ref{lem:coercivity}.

\noindent\textbf{Step \#3: $BV$ bound on $\dot \gamma$.}
Back to \eqref{eq:euler lagrange}, we see that 
$$
p(s) = \Dv L({I}(s), \gamma(s), \dot \gamma(s))$$ is Lipschitz continuous as $I, \gamma, \dot \gamma \in L^\infty$, and $L$ is $\mathcal C^2$. By the Fenchel-Legendre duality, we have $$\dot\gamma(s) = d_p H(I(s), \gamma(s), p(s)).$$ Therefore, $\dot \gamma$ is  $BV$.

From the chain rule involving $BV$ functions (see \cite{ambrosio-dalmaso} and references therein, as well as \cite[Theorem 3.96]{ambrosio-book}), we deduce as a by-product that 
\[[\dot\gamma ]_{BV(0,t)}\leq \left [d_pH \right ]_{\Lip([-J,J]\times B(0,\|\gamma\|_\infty)\times B(0,\|\dot\gamma\|_\infty))}\left ([p]_{BV(0,t)} + [ \gamma ]_{BV(0,t)}  +  [{I}]_{BV(0,t)}\right )\, .\]
Both $p$ and $\gamma$ are Lipschitz continuous, so there exists a constant $C$ such that $\max([p]_{BV(0,t)},[\gamma]_{BV(0,t)})\leq C t$. Hence, we have obtained  \eqref{eq:BVV_bound}. 
\end{proof}

\section{Uniqueness of the variational solution (proof of Theorem \ref{th})}

Let ${I}_1,{I}_2\in BV(0,T)$. There exists a constant $J$ such that $I_1$ and $I_2$ takes value in $[-J,J]$.
Recall the definition of the variational solutions $V_1,V_2$:
\begin{equation*}
\begin{cases}
V_1(t,x) = \displaystyle \inf_{\gamma_1\in \AC(0,t): \gamma_1(t) = x}\left\{ \int_0^t L({I}_1(s), \gamma_1(s),\dot \gamma_1(s))\, ds  + g(\gamma_1(0))\right\}\, ,\medskip\\
V_2(t,x) = \displaystyle \inf_{\gamma_2\in \AC(0,t): \gamma_2(t) = x}\left\{ \int_0^t L({I}_2(s), \gamma_2(s), \dot \gamma_2(s)) ds  + g(\gamma_2(0))\right\}\, .
\end{cases}
\end{equation*}
\begin{lemma}\label{claim:A}
There exists $C>0$ such that for $j=1,2$,
$$
V_j(t,x) \geq \min\left\{ \frac{|x|}{2}, \min_{|x'| \geq |x|/2} g(x')\right\} - Ct\, .
$$
\end{lemma}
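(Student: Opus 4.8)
The plan is to obtain a pointwise lower bound on $V_j(t,x)$ by splitting the minimization over paths $\gamma_j$ according to how far $\gamma_j$ travels away from its endpoint $x$. Fix $(t,x)$ and let $\gamma = \gamma_j$ be any admissible curve with $\gamma(t) = x$. I would distinguish two cases. First, if $|\gamma(0)| \geq |x|/2$, then the initial-data term already contributes $g(\gamma(0)) \geq \min_{|x'|\geq |x|/2} g(x')$; combined with the crude lower bound $L \geq -C$ coming from {\rm (L3)} (taking $\Theta \geq 0$, so $L \geq \Theta(|v|) - C_\Theta \geq -C_\Theta$), the running cost is at least $-Ct$, which yields the second term in the minimum up to the $-Ct$ error. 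Second, if $|\gamma(0)| < |x|/2$, then $\gamma$ must travel a Euclidean distance at least $|x| - |\gamma(0)| > |x|/2$ over the time interval $(0,t)$, so $\int_0^t |\dot\gamma(s)|\,ds \geq |x|/2$.

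For the second case the point is to convert this length bound into a cost bound. Using {\rm (L3)} again in the sharper form $L({I}(s),\gamma(s),\dot\gamma(s)) \geq \Theta(|\dot\gamma(s)|) - C_\Theta$ and the super-linearity of $\Theta$, I can certainly bound $\Theta(r) \geq r - C'$ for a suitable constant $C'$ depending only on $\Theta$ (since $\Theta(r)/r \to \infty$), so that $L \geq |\dot\gamma| - C$ pointwise with an updated $C$. Hence
\[
\int_0^t L({I}(s),\gamma(s),\dot\gamma(s))\,ds + g(\gamma(0)) \;\geq\; \int_0^t |\dot\gamma(s)|\,ds - Ct + g(\gamma(0)) \;\geq\; \frac{|x|}{2} - Ct,
\]
using $g \geq 0$ from {\rm (G)}. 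Combining the two cases, every admissible $\gamma$ has cost at least $\min\{|x|/2,\ \min_{|x'|\geq |x|/2} g(x')\} - Ct$, and taking the infimum over $\gamma$ gives the claim. The constant $C$ is the one produced by {\rm (L3)} after the adjustment $\Theta(r) \geq r - C'$, and it is independent of $j$ since {\rm (L3)} is uniform in ${I} \in (-J,J)$.

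I do not expect a genuine obstacle here; this is a soft coercivity/triangle-inequality argument. The only mild subtlety is making sure the constant $C$ is uniform — it must not depend on $(t,x)$ or on which of ${I}_1, {I}_2$ is used — but this is immediate because {\rm (L3)} holds with constants depending only on the bound $J$ on $\|{I}\|_\infty$, which is common to both. One should also note that the bound is vacuous (the right-hand side could be negative) for $t$ large or $x$ near the origin, which is fine: the lemma is only used to guarantee coercivity of $V_j(t,\cdot)$ for each fixed $t$, ensuring that the infimum in \eqref{eq:Hopf Lax} is attained and that $\min_x V_j(t,\cdot)$ is achieved on a bounded set, since $\min_{|x'|\geq|x|/2} g(x') \to +\infty$ as $|x|\to\infty$ by the coercivity in {\rm (G)}.
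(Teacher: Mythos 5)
Your proof is correct and follows essentially the same reasoning as the paper: both rest on the crude coercivity $L \geq |v| - C$ from {\rm (L3)} together with the observation that either $|\gamma(0)| \geq |x|/2$ or $\gamma$ travels at least $|x|/2$. The only cosmetic difference is that you organize it as an explicit case split, whereas the paper keeps a single chain of inequalities and applies the triangle-inequality dichotomy at the very last step.
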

\begin{proof}
By definition of the variational solution and {\rm (L3)}, we have
\begin{align*}
V_j(t,x) &= \int_0^{t} L\left (I_j(s),\gamma_j^{t,x}(s),\dot{\gamma}_j^{t,x}(s)\right )\,ds + g\left (\gamma_j^{t,x}(0)\right )\\
&\geq \int_0^{t} \left (|\dot{\gamma}_j^{t,x}(s)| - C\right )\,ds  + g\left (\gamma_j^{t,x}(0)\right )\\
&\geq \left |x - \gamma_j^{t,x}(0)\right |- Ct + g\left (\gamma_j^{t,x}(0)\right )\\
&\geq \min \left \{|x|/2, \min_{|x'| \geq |x|/2} g(x')\right \}-C t
\end{align*}
where we used the fact that $\gamma^{t,x}(t) = x$ and that 
$
|x| \leq |x - \gamma^{t,x}(0)| + |\gamma^{t,x}(0)|
$, so that: either $|x- \gamma^{t,x}(0)| \geq |x|/2$ or $|\gamma^{t,x}(0)| \geq |x|/2$.
\end{proof}

By Lemma \ref{claim:A} and {\rm (G)}, we deduce that $x \mapsto V_j(t,x)$ attains   minimum in some bounded set, say $B(0,K)$, uniformly for $t \in (0,T)$. 

Let $x_1^t$ (resp. $x_2^t$) be some minimum point for $V_1(t,\cdot)$ (resp. $V_2(t,\cdot)$) -- this might not be unique -- and let $\gamma_1^t(s)$ (resp. $\gamma_2^t(s)$) be an optimal trajectory ending up at $x_1^t$ (resp. $x_2^t$). 
We deduce from Lemma \ref{lem:regularity} that $\dot \gamma_j^{t}$ lies in $BV$, uniformly with respect to $t\in (0,T)$: 
\begin{equation}\label{claim:boundedness}
\max_{j=1,2} \left [\dot\gamma^t_j\right]_{BV(0,t)} \leq C\,.
\end{equation}

%

The optimality of $\gamma_1^t$ and $\gamma_2^t$, together with the constraints $\min V_1(t,\cdot) = 0$ and $\min V_2(t,\cdot) = 0$ implies the following set of inequalities: 
\begin{align*}
0 = V_1(t,x_1^t) & =  \int_0^t L({I}_1(s), \gamma_1^t(s),\dot \gamma_1^t(s))\, ds  + g(\gamma_1^t(0))\nonumber\\
& \leq    \int_0^t L({I}_1(s), \gamma_2^t(s),\dot \gamma_2^t(s))\, ds  + g(\gamma_2^t(0)) \nonumber \\
& =   \int_0^t L({I}_2(s), \gamma_2^t(s), \dot \gamma_2^t(s)) ds  + g(\gamma_2^t(0))+ \int_0^t \phi_1(t,s) \left ( {I}_1(s) - {I}_2(s) \right )\,ds\, ,\nonumber\\
& =  - \int_0^t \phi_1(t,s) \left ( {I}_2(s) - {I}_1(s) \right )\, ds\,.
\end{align*}
i.e. 
\begin{equation}\label{eq:ineq1}
\int_0^t \phi_1(t,s) \left ( {I}_2(s) - {I}_1(s) \right )\, ds \leq 0\, ,
\end{equation}
where the positive weight $\phi_1$ is given by
\begin{equation}\label{eq:phi1}
\phi_1(t,s) = \int_0^1  \dfrac{\partial L}{\partial I}\left ( (1-\theta) {I}_1(s) + \theta{I}_2(s), \gamma_2^t(s), \dot\gamma_2^t(s) \right ) \, d\theta  \, .
\end{equation}
Similarly, by exchanging the roles of the two solutions, we obtain
\begin{equation}\label{eq:ineq2}
\int_0^t \phi_2(t,s) \left ( {I}_2(s) - {I}_1(s) \right )\, ds \geq 0\, ,
\end{equation}
where the positive weight $\phi_2$ is   given by
\begin{equation}\label{eq:phi2}
\phi_2(t,s) = \int_0^1  \dfrac{\partial L}{\partial I}\left ( (1-\theta) {I}_2(s) + \theta{I}_1(s), \gamma_1^t(s), \dot\gamma_1^t(s) \right ) \, d\theta   \, .
\end{equation}
By Assumption {\rm (L2)} and the uniform  boundedness of $( I_j(s), \gamma^t_j(s), \dot\gamma^t_j(s))$ (by \eqref{claim:boundedness}), there exists $\lambda>0$ such that: 
\begin{equation}\label{eq:beta bound}
(\forall t,s) \quad \min_{j=1,2}  \phi_j(t,s) \geq \lambda\,.
\end{equation}


%
%
%
%

Functions of bounded variations have left- and right-limits everywhere. 
Here,  we focus on the value of the right-limit at the origin.  This is expressed in the following statement.


\begin{lemma}\label{lem:phi_BV_tau}
Let $\phi_1, \phi_2$ be defined as in \eqref{eq:phi1} and \eqref{eq:phi2}. Then
\begin{equation*}
\lim_{t \to 0+} \left\{  \left[ \phi_1(t,\cdot)\right]_{BV(0,t)}  + 
 \left[ \phi_2(t,\cdot)\right]_{BV(0,t)} 
\right\}=0 .
\end{equation*}
\end{lemma}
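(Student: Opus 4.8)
The plan is to control $[\phi_j(t,\cdot)]_{BV(0,t)}$ by means of the $BV$ chain rule applied to the composition defining $\phi_j$, and then show that each contributing term vanishes as $t\to 0^+$. I would first record that $\phi_1(t,s)$ is the composition of the $\mathcal C^1$ map $(I,J,x,v)\mapsto \int_0^1 \partial_I L((1-\theta)I+\theta J, x, v)\,d\theta$ with the $BV(0,t)$ curve $s\mapsto (I_1(s), I_2(s), \gamma_2^t(s), \dot\gamma_2^t(s))$. Since all four arguments remain in a fixed compact set $\mathcal{K}_0 = [-J,J]^2\times B(0,K)\times B(0,C)$ independently of $t$ (by the $L^\infty$ bounds of Step~\#1 and Step~\#2 of Lemma~\ref{lem:regularity}, and \eqref{claim:boundedness}), the relevant Lipschitz constant $\Lambda := [\,\partial_I L\,]_{\Lip(\mathcal{K}_0)}$ — more precisely of the averaged map, which is controlled by it — is a fixed constant. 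The $BV$ chain rule (as invoked in Lemma~\ref{lem:regularity}, \cite[Theorem 3.96]{ambrosio-book}) then gives
\begin{equation*}
\left[\phi_1(t,\cdot)\right]_{BV(0,t)} \leq \Lambda\left([I_1]_{BV(0,t)} + [I_2]_{BV(0,t)} + [\gamma_2^t]_{BV(0,t)} + [\dot\gamma_2^t]_{BV(0,t)}\right),
\end{equation*}
and symmetrically for $\phi_2$ with $\gamma_1^t$.

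Next I would estimate each term on the right as $t\to 0^+$. For the $I_j$ contributions: since $I_j\in BV(0,T)$, the function $t\mapsto [I_j]_{BV(0,t)}$ is nondecreasing and tends to $0$ as $t\to 0^+$ (the total variation over $(0,t)$ of a fixed $BV$ function vanishes as $t\downarrow 0$). For the $\gamma_j^t$ contribution: $\gamma_j^t$ is Lipschitz on $(0,t)$ with constant $\|\dot\gamma_j^t\|_{L^\infty}\leq C$ uniform in $t$ (Step~\#2), so $[\gamma_j^t]_{BV(0,t)}\leq C t\to 0$. For the $\dot\gamma_j^t$ contribution: Lemma~\ref{lem:regularity} gives directly $[\dot\gamma_j^t]_{BV(0,t)}\leq C_K(t + [I_j]_{BV(0,t)})$, and both summands go to $0$ as $t\to 0^+$ by the preceding remarks. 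Combining, all four terms vanish, hence $[\phi_1(t,\cdot)]_{BV(0,t)}\to 0$ and likewise $[\phi_2(t,\cdot)]_{BV(0,t)}\to 0$, which is the claim.

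The main obstacle I anticipate is bookkeeping the uniformity: I must make sure the compact set on which $\partial_I L$ is Lipschitz does not depend on $t$, which hinges on the fact that the $L^\infty$ bounds on $\gamma_j^t$ and $\dot\gamma_j^t$ from Lemma~\ref{lem:regularity} are uniform for $(t,x)\in(0,T)\times B(0,K)$ — here $x = x_j^t$ ranges over the fixed ball $B(0,K)$ in which the minima of $V_j(t,\cdot)$ lie (established just before the lemma via Lemma~\ref{claim:A} and (G)). A secondary point requiring a line of justification is that the averaging over $\theta\in[0,1]$ does not spoil the $BV$ estimate: one can either apply the chain rule to the smooth function $\Phi(I,J,x,v):=\int_0^1\partial_I L((1-\theta)I+\theta J,x,v)\,d\theta$ directly (it is $\mathcal C^1$ with derivatives bounded on $\mathcal{K}_0$ by differentiation under the integral sign), or pull the variation estimate through the $\theta$-integral using Fubini/Jensen. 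Beyond these uniformity checks, the argument is routine.
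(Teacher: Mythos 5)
Your proposal is correct and follows essentially the same decomposition as the paper: control $[\phi_j(t,\cdot)]_{BV(0,t)}$ by the Lipschitz constant of $\partial_I L$ on a fixed compact set times the sum $[I_1]_{BV(0,t)}+[I_2]_{BV(0,t)}+[\gamma_j^t]_{BV(0,t)}+[\dot\gamma_j^t]_{BV(0,t)}$, then observe that each term vanishes as $t\to 0^+$ (the first two because a finite $BV$ measure assigns vanishing mass to $(0,t)$, the third because $\gamma_j^t$ is uniformly Lipschitz so $[\gamma_j^t]_{BV(0,t)}\leq Ct$, the fourth via \eqref{eq:BVV_bound}). The only cosmetic difference is that the paper writes out $\partial\phi_2/\partial s$ explicitly as a measure and bounds it termwise, while you invoke the $BV$ chain rule abstractly with the Lipschitz constant of the $\theta$-averaged map; both are handled by the same citation \cite[Theorem 3.96]{ambrosio-book}, and your Fubini/differentiation-under-the-integral remark closes the small gap about the $\theta$-average.
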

\begin{proof}
Our first observation is that $BV$ regularity of $\{{I}_{j}\}_{j=1,2}$ implies the following smallness estimate: 
\begin{equation}
\label{eq:smallness phi}
\lim_{t \to 0+}  \left\{ 
 \left[ I_1\right]_{BV(0,t)}  +  \left[ I_2\right]_{BV(0,t)}  \right\}
=0.
\end{equation}
The important point here is that the left point of the interval is fixed to 0. The same conclusion would not be true if the interval $(0,t)$ would be replaced with $(-t,t)$ due to possible jump discontinuity at the origin.  To prove \eqref{eq:smallness phi}, let us decompose ${I}_1$, say, into a difference of non-decreasing functions ${I}_1 = {I}_1^+ - {I}_1^-$. Then, 
\begin{equation}\label{eq:sheng}
\left[ I_1\right]_{BV(0,t)}  \leq  I_1^+\big|_{0+}^t   + I_1^-\big|_{0+}^t 
%
\underset{t\to 0+}{\longrightarrow} 0\,,     
\end{equation}
simply because ${I}_1^+$ and ${I}_1^-$ have right limits at the origin. 

By \eqref{eq:BVV_bound},  
 we get that this vanishing limit can be extended to $\dot \gamma_{j}^{t}$ as well: 
\begin{equation}\label{eq:chong}
\left[ \dot \gamma^t_j \right]_{BV(0,t)}  \leq C \left( t + \left[ I_j\right]_{BV(0,t)} \right) 
%
 \underset{t\to 0+}{\longrightarrow} 0.
\end{equation}
Consequently, we are able to estimate 
$\left[ \phi_2(t, \cdot)\right]_{BV(0,t)}$ 
as follows. 
To keep the idea concise, we will compute the derivative $\frac{\partial \phi_2}{\partial s}$ of the $BV$ function $\phi_2(t, \cdot)$ in the sense of a finite measure on $(0,t)$, so that $[\phi_2(t,\cdot)]_{BV(0,t)} = \int_{(0,t)}  | \frac{\partial \phi_2}{\partial s}(t,s)  |\,ds$.   We shall adopt this convention for the remainder of the paper.
\begin{align*}
  \left|\dfrac{\partial \phi_2}{\partial s}(t,s)\right | 
&\leq   \int_0^1 \left |d^2_{I,I}  L(\Gamma(s)) \left( (1-\theta)\dot{I}_2(s)  +\theta \dot {I}_1(s) \right) \right| \,d\theta \\
&\quad  + \int_0^1 \left|d^2_{{I},x} L(\Gamma(s)) \dot \gamma_1^t(s) \right|   \, d\theta  
 +    \int_0^1 \left|d^2_{{I}, v} L(\Gamma(s)) \ddot{\gamma}_1^t(s)  \right | 
 \, d\theta \\
 &\leq C \left (   |\dot {I}_1(s)| +  |\dot {I}_2(s)| + |\dot \gamma_1^t(s) | + |\ddot \gamma_1^t(s) | \right )\, ,
\end{align*}
where we have used the shortcut notation  $\Gamma(s) = ((1-\theta){I}_2(s) + \theta {I}_1(s), \gamma^t_1(s), \dot\gamma^t_1(s))$. We may integrate the latter over the open interval $(0,t)$ to obtain
\begin{align*}
[\phi_2(t,\cdot)]_{BV(0,t)} 
&\leq C \left(  [I_1  ]_{BV(0,t)} + [I_2]_{BV(0,t)} + t  + \left [\dot\gamma_1^t\right ]_{BV(0,t)}  \right)\\
&\leq C \left( [I_1]_{BV(0,t)} + [I_2]_{BV(0,t)} + t\right)
\end{align*}
where we used \eqref{eq:chong}. By \eqref{eq:sheng}, we  deduce that $\lim_{t \to 0+} [\phi_2(t,\cdot)]_{BV(0,t)} = 0$. The proof for $\phi_1$ is analogous. 
\end{proof}


We are now in position to prove Theorem \ref{th}.
\begin{proof}[Proof of Theorem \ref{th}]
Let $\mu  = {I}_2  - {I}_1$ and suppose to the contrary that  $\mu  \neq 0$ on a set of positive measure in $(0,T)$. 

We claim that we may assume, without loss of generality, that
$\mu \neq 0$ in a set of positive measure in $(0,t)$, for each $t \in (0,T)$. To see this claim, let
%
%
%
%
$$
t_0:= \sup\{t\geq 0: \mu(s) = 0 \,\, \text{ a.e. in }\, (0,t)\}.
$$
If $t_0 = 0$, we are done. If $t_0 >0$, then the variational solutions $V_1(t,x) \equiv V_2(t,x)$ are identical for $(t,x) \in [0,t_0] \times \mathbb{R}^d$. By Lemma \ref{claim:A}, $x\mapsto V_j(t_0,x)$ is coercive, and the other conditions in {\rm (G)} are clearly satisfied. Thus we may re-label the initial time to be $t_0$.  In any case, it suffices to derive a contradiction assuming $\mu \neq 0$ in a set of positive measure in $(0,t)$, for each $t \in (0,T)$. 

 
Using $ \int_{0}^t  \phi_2(t,s) \mu (s)\,ds  \geq 0$ (by  \eqref{eq:ineq2}), we may integrate by parts to obtain
\begin{align*}
\phi_2(t,t-) \int_0^t \mu(\tau)\,d\tau & = \int_{(0,t)} \frac{\partial \phi_2}{\partial s}(t,s) \left( \int_0^s \mu(\tau)\,d\tau\right)\,ds + \int_0^t  \phi_2(t,s) \mu (s)\,ds \\
&\geq \int_{(0,t)} \frac{\partial \phi_2}{\partial s}(t,s) \left( \int_0^s \mu(\tau)\,d\tau\right)\,ds 
\end{align*}
Taking the negative part, we deduce the following partial estimate,
\begin{equation}\label{eq:A1}
\phi_2(t,t-) \left ( \int_0^t \mu (s)\,ds \right)_- \leq   \left( \sup_{s\in (0, t)}\left| \int_0^s \mu(\tau)\,d\tau\right| \right)  \int_{(0,t)} \left| \frac{\partial \phi_2}{\partial s} (t,s)\right|\,ds
\end{equation}
Similarly we deduce from  \eqref{eq:ineq1} that
\begin{align*}
\phi_1(t,t-) \int_0^t \mu(\tau)\,d\tau & = \int_0^t \frac{\partial \phi_1}{\partial s}(t,s) \left( \int_0^s \mu(\tau)\,d\tau\right)\,ds +\int_0^t  \phi_1(t,s) \mu (s)\,ds \\
&\leq \int_0^t \frac{\partial \phi_1}{\partial s}(t,s) \left( \int_0^s \mu(\tau)\,d\tau\right)\,ds 
\end{align*}
Taking the positive part, we deduce the following complementary estimate,
\begin{equation}\label{eq:A2}
\phi_1(t,t-)  \left( \int_0^t  \mu (s)\,ds  \right)_+ \leq   \left( \sup_{s\in (0, t)}\left| \int_0^s \mu(\tau)\,d\tau\right| \right)  \int_0^t \left| \frac{\partial \phi_1}{\partial s} (t,s)\right|\,ds
\end{equation}
Combining \eqref{eq:A1} and \eqref{eq:A2}, together with \eqref{eq:beta bound}, we obtain 
\begin{equation}\label{eq:A3}
\lambda \left|  \int_0^t \mu (s)\,ds \right| \leq   \max_{i=1,2} \left\{ \int_0^t \left| \frac{\partial \phi_i}{\partial s} (t,s)\right|\,ds\right\}    \left( \sup_{s\in (0, t)}\left| \int_0^s \mu(\tau)\,d\tau\right| \right) 
\end{equation}
Next, Lemma \ref{lem:phi_BV_tau} ensures that there exists  $t_1>0$ so that 
\[
\sup_{t\in (0,t_1)}  \max_{i=1,2} \left\{ \int_0^t \left| \frac{\partial \phi_i}{\partial s} (t,s)\right|\,ds\right\} \leq \frac{\lambda}{2}. 
\]
Then, taking supremum in \eqref{eq:A3} for $0 < t < t_1$,  we have
\begin{align*}
\lambda \left(\sup_{t\in (0,t_1)}  \left|  \int_0^t \mu (s)\,ds \right| \right) \leq 
\frac{\lambda}{2}    \left(\sup_{s\in (0,t_1)}\left| \int_0^s \mu(\tau)\,d\tau\right| \right). 
\end{align*}
This implies $\int_0^t \mu(s)\,ds = 0$ for all $t \in  [0,t_1]$. Hence, $\mu(t) = 0$ almost everywhere on $(0,t_1)$. This is in contradiction with the assumption that $\mu \neq 0$ on a set of positive measure in $(0,t_1)$, and we conclude that ${I}_2 - {I}_1 = \mu = 0$ a.e.  Finally $V_1 \equiv V_2$ by virtue of the variational formulation.
\end{proof}


\section{The Pessimization Principle: $I(t)$ is non-decreasing}

The pessimization principle \cite{Diekmann} is a concept in adaptive dynamics, which says that if the environmental feedback is encoded by a scalar quantity $I(t)\in \R$ at any time, mutations and natural selection inevitably lead to deterioration/Verelendung. In the setting of this paper, it can be formulated by claiming that the population burden $I(t)$ is a non-decreasing function. 

In this section, we   give an additional assumption that guarantees this claim.
\begin{theorem}\label{thm:pess}
Under the assumptions {\rm (L1) - (L4)}, let $(u,I)$ be the unique solution pair $(u,I)$ to \eqref{eq:HJ}. Assume, in addition, that 
\begin{description}
\item[(L5):]\quad  $(\forall {I}, x,v)$  \quad $\displaystyle L(I,x,v) \geq L(I,x,0)$.
\end{description}
Then  $I$ is non-decreasing with respect to time. 
\end{theorem}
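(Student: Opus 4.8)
\textbf{Proof proposal for Theorem \ref{thm:pess}.}

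The plan is to argue by contradiction at the level of the variational formulation, exploiting the same integrate-by-parts / smallness machinery that powered Theorem \ref{th}, but now with (L5) providing the sign information. Suppose $I$ fails to be non-decreasing. Then there exist times $t_0 < t_1$ in $(0,T)$ with $I(t_1) < I(t_0)$ (using left/right limits of the $BV$ function $I$ if needed); after re-labelling the initial time to $t_0$ as in the proof of Theorem \ref{th} — legitimate because $V(t_0,\cdot)$ again satisfies (G) by Lemma \ref{claim:A} — we may assume $I$ starts at some value $I(0+) = I_0$ and dips strictly below $I_0$ on a set of positive measure in $(0,t)$ for every small $t>0$. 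The strategy is then to compare the genuine pair $(V,I)$ against the ``frozen'' competitor obtained by replacing $I(\cdot)$ on $(0,t)$ with a constant candidate, and to show the constraint $\min V(t,\cdot)=0$ is violated.

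Concretely, I would first record the monotonicity consequence of (L5): for a minimizing curve $\gamma^{t,x}$ associated with \eqref{eq:Hopf Lax} one has, pointwise in $s$,
\[
L(I(s),\gamma^{t,x}(s),\dot\gamma^{t,x}(s)) \;\geq\; L(I(s),\gamma^{t,x}(s),0),
\]
so that a competitor using the \emph{constant} trajectory anchored at $x$ is not obviously better; instead the key is to combine (L5) with (L2). The heart of the argument: let $x^t$ be a minimum point of $V(t,\cdot)$ and $\gamma^t$ an optimal trajectory, so $0 = V(t,x^t) = \int_0^t L(I(s),\gamma^t(s),\dot\gamma^t(s))\,ds + g(\gamma^t(0))$. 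On the other hand, by optimality of the variational solution $\widetilde V$ built with the constant multiplier $\widetilde I \equiv I_0$ and the constraint $\min \widetilde V = 0$ (the frozen problem trivially has the value function with minimum $0$ at $t=0$ and the minimum stays close to $0$ for small $t$ by continuity), testing $\widetilde V(t,x^t)$ with the same curve $\gamma^t$ gives
\[
\widetilde V(t,x^t) \;\leq\; \int_0^t L(I_0,\gamma^t(s),\dot\gamma^t(s))\,ds + g(\gamma^t(0)) \;=\; \int_0^t \big[L(I_0,\cdot)-L(I(s),\cdot)\big]\,ds,
\]
where the argument of $L$ is $(\gamma^t(s),\dot\gamma^t(s))$. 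By the mean value theorem this difference equals $\int_0^t \psi(t,s)\,(I_0 - I(s))\,ds$ with $\psi(t,s) = \int_0^1 \partial_I L((1-\theta)I(s)+\theta I_0,\gamma^t(s),\dot\gamma^t(s))\,d\theta \geq \lambda > 0$ by (L2) and the uniform bounds from Lemma \ref{lem:regularity}. Since $I(s) < I_0$ on a positive-measure set and $I(s)\leq I_0 + o(1)$ elsewhere for small $t$ (again right-continuity of the $BV$ function at $0$ via \eqref{eq:smallness phi}), the right-hand side is strictly positive — forcing $\widetilde V(t,x^t) > 0$, hence $\min \widetilde V(t,\cdot)$ could still be $0$ at a different point. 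To close this I need the reverse comparison as well: running the roles backwards, $V(t, \widetilde x^t) \leq \int_0^t \psi_2(t,s)(I(s)-I_0)\,ds \leq 0$ up to a vanishing error, contradicting $\min V(t,\cdot)=0$ once the $BV$-smallness of the weights (exactly Lemma \ref{lem:phi_BV_tau}, applied with one argument frozen) lets the integration-by-parts estimate \eqref{eq:A3}-style bound absorb the error terms.

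The main obstacle I anticipate is the bookkeeping around the frozen competitor: one must verify that the constant-multiplier variational problem genuinely satisfies its own constraint $\min\widetilde V(t,\cdot)=0$ only approximately, and quantify the defect as $o(t)$ or $o(\sup_{(0,t)}|\int_0^s(I-I_0)|)$ so that it can be swallowed by the $\lambda$-coercivity, just as in the endgame of Theorem \ref{th}. Equivalently — and this is probably the cleaner route — one avoids the frozen problem entirely and works with the two genuine inequalities \eqref{eq:ineq1}-\eqref{eq:ineq2} specialized to the comparison of $I$ with a translate/constant, using (L5) precisely to guarantee that the competitor curve can be taken with $\dot\gamma \equiv 0$ on the relevant subinterval without increasing the cost, which is what upgrades the a.e.\ identity $I_1 = I_2$ of Theorem \ref{th} into the one-sided conclusion $\dot I \geq 0$. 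The regularity inputs (Lemma \ref{lem:regularity} for uniform bounds on $(\gamma^t,\dot\gamma^t)$, Lemma \ref{lem:phi_BV_tau} for vanishing of the weight's variation near $0$) carry over verbatim, so the real work is isolating the sign and handling the re-labelling of the initial time at a generic point $t_0$ where $I$ has a downward jump or downward trend.
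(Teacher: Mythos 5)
Your proposal does not follow the paper's line of reasoning, and as written it has a genuine gap that I do not think is patchable by the tools you invoke. The first route you describe---comparing the true pair $(V,I)$ against a ``frozen'' value function $\widetilde V$ built with the constant multiplier $I_0$---founders for a structural reason: the whole machinery of inequalities \eqref{eq:ineq1}--\eqref{eq:ineq2} in the proof of Theorem~\ref{th} relies on \emph{both} competitors satisfying the constraint $\min V_j(t,\cdot)=0$ at every time, since $V_j(t,x_j^t)=0$ is used on both sides of the comparison. The frozen problem has no reason to satisfy $\min\widetilde V(t,\cdot)=0$ for $t>0$ (you acknowledge this), so you only get one of the two inequalities and cannot close the loop; the claimed ``reverse comparison'' $V(t,\widetilde x^t)\le\cdots$ requires a point $\widetilde x^t$ where $\widetilde V$ vanishes, which need not exist. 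Trying to quantify the defect of the frozen problem as $o(t)$ is essentially trying to re-prove what you want and does not obviously lead anywhere. Your ``cleaner route'' is stated only as a heuristic: you never actually specify the competitor curves nor explain how taking $\dot\gamma\equiv 0$ converts a two-sided a.e.\ identity into a one-sided monotonicity. Assumption {\rm (L2)} gives a sign when comparing two $I$'s, but you never produce a second $I$ with the right properties.

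The paper's actual argument is local and much more elementary, and it extracts sign information from a \emph{single} solution pair rather than from a pair of solutions. The key observations, which do not appear in your sketch, are: (i) if $x^t$ is a minimum point of $V(t,\cdot)$ with optimal curve $\gamma^t$, the dynamic programming principle plus the constraint force $L(I(t-),\gamma^t(t-),\dot\gamma^t(t-))\le 0$, and then {\rm (L5)} upgrades this to $L(I(t-),x^t,0)\le 0$; (ii) extending the trajectory by the constant path $\gamma(s)\equiv x^t$ for $s>t$ and using the nonnegativity of $V(t+h,x^t)$ gives $L(I(t+),x^t,0)\ge 0$. Comparing these and using {\rm (L2)} gives $I(t-)\le I(t+)$, i.e.\ no downward jumps. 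The non-jump part (ruling out a downward trend) is then handled by a short real-variable argument combining the two one-sided inequalities over a carefully chosen interval $[t_0,t_3)$. None of the machinery from Lemma~\ref{lem:phi_BV_tau} or the integration-by-parts estimate \eqref{eq:A3} is needed here; that apparatus was designed for comparing two unknown multipliers, not for detecting the sign of a single one. If you want to salvage something close to your approach, the decisive missing idea is to look at the \emph{pointwise sign of the Lagrangian at the minimum point}, evaluated at zero velocity, from both sides of $t$, rather than at integrated quantities over $(0,t)$.
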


Note that {\rm (L5)} is equivalent to $d_vL(I,x,0) = 0$ by convexity, and thus to $d_p H(I,x,0) =0$ by duality. It is clearly verified for the two examples in Section \ref{sec:examples} by symmetry of the Hamiltonian function with respect to $p=0$.

\begin{corollary}
Under the assumption \eqref{eq:condition R}, let 
 $(u,I)$ be the unique solution pair to \eqref{eq:HJa}, with the initial data $g$ as in {\rm (G)}. Then $I$ is non-decreasing  with respect to time.
\end{corollary}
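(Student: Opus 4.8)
\textbf{Proof plan for Theorem~\ref{thm:pess}.} The plan is to show that $\int_{t_1}^{t_2} (\text{something positive}) \, (I(t_2) - I(t_1))$-type inequality forces $I$ to be non-decreasing, by exploiting a time-shift comparison between the variational solution and a suitably translated trajectory, much as in the proof of Theorem~\ref{th}. Fix $0 < t_1 < t_2 < T$; I want to prove $I(t_1-) \le I(t_2+)$ (so that the right-continuous representative, or any good representative, is monotone up to the null set where $I$ is not defined by one-sided limits). The key idea is that because of {\rm (L5)}, freezing a trajectory (setting $\dot\gamma \equiv 0$) is ``cheapest'' at the level of the $v$-variable, so one can build a competitor for $V(t_2,\cdot)$ out of an optimal trajectory for $V(t_1,\cdot)$ by prepending a constant piece on $[0, t_2 - t_1]$ and shifting time.

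First I would set up the comparison. Let $x_1$ be a minimum point of $V(t_1,\cdot)$, so $V(t_1,x_1) = 0$, with optimal trajectory $\gamma_1 \in \AC(0,t_1)$, $\gamma_1(t_1) = x_1$. Define $\tilde\gamma \in \AC(0,t_2)$ by $\tilde\gamma(s) = \gamma_1(0)$ for $s \in [0, t_2 - t_1]$ and $\tilde\gamma(s) = \gamma_1(s - (t_2 - t_1))$ for $s \in [t_2 - t_1, t_2]$, so $\tilde\gamma(t_2) = x_1$. Evaluating the action of $\tilde\gamma$ for $V(t_2, \cdot)$ and using the constraint $V(t_2, x_1) \ge \min V(t_2,\cdot) = 0$, I get
\begin{equation*}
0 \le \int_0^{t_2-t_1} L(I(s), \gamma_1(0), 0)\, ds + \int_0^{t_1} L(I(s + t_2 - t_1), \gamma_1(s), \dot\gamma_1(s))\, ds + g(\gamma_1(0)).
\end{equation*}
Subtracting the identity $0 = V(t_1,x_1) = \int_0^{t_1} L(I(s), \gamma_1(s), \dot\gamma_1(s))\, ds + g(\gamma_1(0))$ yields
\begin{equation*}
0 \le \int_0^{t_2-t_1} L(I(s), \gamma_1(0), 0)\, ds + \int_0^{t_1}\big[ L(I(s+t_2-t_1), \gamma_1(s), \dot\gamma_1(s)) - L(I(s), \gamma_1(s), \dot\gamma_1(s)) \big]\, ds.
\end{equation*}
By {\rm (L2)}, the bracketed integrand has the sign of $I(s + t_2 - t_1) - I(s)$, weighted by a positive factor $\int_0^1 \partial_I L(\cdots)\,d\theta \ge \lambda > 0$ (uniform bound as in \eqref{eq:beta bound}, using that the relevant trajectories have uniformly bounded $\dot\gamma$ by Lemma~\ref{lem:regularity}). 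One then wants to let $t_2 \downarrow t_1$: the first integral is $O(t_2 - t_1)$ since $L(I(s), \gamma_1(0), 0)$ is bounded on the relevant range, so dividing by $t_2 - t_1$ and passing to the limit along Lebesgue points, the ``discrete time derivative'' argument should give, at a.e.\ $t_1$, an inequality of the form $0 \le C + (\text{positive weight}) \cdot \dot I(t_1)$ in the sense of measures — but this only controls the absolutely continuous part. To capture the jumps I would instead keep $t_2 - t_1$ fixed, not divide, and argue directly that the inequality combined with reversing the roles (shifting the other way, using an optimal trajectory for $V(t_2,\cdot)$ truncated to length $t_1$) pins down the sign of $\int_0^{t_1}[I(s + t_2 - t_1) - I(s)]\,ds$ modulo an $O(t_2 - t_1)$ error, and then localize via the Lebesgue differentiation theorem in the $t_1$ variable to conclude $I(t_1-) \le I(t_2+)$ for a.e.\ pair, hence monotonicity of the $BV$ representative. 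Then the corollary is immediate, since \eqref{eq:condition R} implies {\rm (L5)} as $L = \frac14|v|^2 - R(I,x) \ge -R(I,x) = L(I,x,0)$.

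The main obstacle I anticipate is the treatment of the first (``freezing'') integral $\int_0^{t_2 - t_1} L(I(s), \gamma_1(0), 0)\, ds$ and, more subtly, the fact that the optimal trajectory $\gamma_1$ (and its starting point $\gamma_1(0)$) depends on $t_1$; one needs the $L^\infty$ bounds on $\gamma_1(0)$ and $\dot\gamma_1$ from Lemma~\ref{claim:A} and Lemma~\ref{lem:regularity} to be uniform in $t_1$ over compact subintervals, which they are. A second delicate point is that the naive ``divide by $t_2-t_1$'' only yields monotonicity of the absolutely continuous part of $I$; ruling out downward jumps requires the sharper fixed-increment comparison in both directions, and making that rigorous — carefully handling the one-sided limits of the $BV$ function $I$ at the jump times and the dependence of $\lambda$ on the (uniform) bounds — is where the real work lies. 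Everything else is a repackaging of the estimates already established in Sections~2 and~3.
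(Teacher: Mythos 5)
Your proposal takes a genuinely different route from the paper's, and unfortunately it has a gap that I do not think is repairable along the lines you sketch.

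The paper does not perform a time-shift comparison at all. Instead it exploits a pointwise observation at the \emph{endpoint} of the optimal trajectory: since $0 \leq u(s,\gamma^t(s)) = \int_0^s L(I,\gamma^t,\dot\gamma^t)\,ds' + g(\gamma^t(0))$ for $0<s<t$, with equality at $s=t$, the absolutely continuous function $\Phi(s) := \int_0^s L\,ds' + g(\gamma^t(0))$ attains its minimum (namely $0$) at $s=t$, which forces $\Phi'(t-) = L(I(t-),x^t,\dot\gamma^t(t-)) \leq 0$. By {\rm (L5)} this yields the one-sided sign $L(I(t-),x^t,0) \leq 0$. Complementarily, appending the \emph{constant} extension $\tilde\gamma(s)=x^t$ for $s>t$ as a competitor for $V(t+h,\cdot)$ and using $V(t+h,x^t)\geq 0$, $V(t,x^t)=0$ gives $\int_t^{t+h} L(I(s),x^t,0)\,ds \geq 0$, hence $L(I(t+),x^t,0) \geq 0$. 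The strict monotonicity {\rm (L2)} then forces $I(t-)\leq I(t+)$ pointwise, and a supremum argument (with the right-continuous representative) converts this into full monotonicity, including the a.c.\ part. Nothing in this chain requires dividing by a time increment or comparing two distant times.

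The gap in your argument is the ``freezing'' integral $A = \int_0^{t_2-t_1} L(I(s),\gamma_1(0),0)\,ds$, and I do not see how your plan can control it. The point $\gamma_1(0)$ is the \emph{starting} point of the optimal trajectory, not a minimizer of $V(t_1,\cdot)$, so there is no reason for $L(I(s),\gamma_1(0),0)$ to have any definite sign, and neither {\rm (L5)} nor the zero-value constraint helps there (both bite only at the terminal minimizer $x^{t}$, not at $\gamma_1(0)$). Consequently the inequality you obtain, $0 \leq A + \int_0^{t_1}\phi(s)[I(s+\delta)-I(s)]\,ds$ with $\delta = t_2-t_1$, only bounds the weighted shift integral from below by $-A = O(\delta)$. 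That is too weak: a downward jump of size $j>0$ in $I$ contributes roughly $-j\delta$ to the unweighted shift integral, which is entirely compatible with a lower bound of $-C\delta$. So ``keeping $t_2-t_1$ fixed and localizing via Lebesgue points,'' as you propose, cannot distinguish a downward jump from no jump at all. (And, as you yourself note, dividing by $\delta$ only reaches the a.c.\ part.) The missing ingredient is precisely the paper's Step~1 observation that minimality of the endpoint forces $L(I(t-),x^t,0)\leq 0$: a pointwise sign condition obtained \emph{without} taking any difference quotient in $I$, which you have no analogue of. Your final remark that \eqref{eq:condition R} implies {\rm (L5)} via $L = \tfrac14|v|^2 - R \geq -R = L(I,x,0)$ is correct, so if Theorem~\ref{thm:pess} were proved, the corollary would indeed follow at once.
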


\begin{corollary}
Under the assumptions of Corollary \ref{cor:genetics}, let $(u,I)$ be the unique solution pair to the problem \eqref{eq:HJ} with the Hamiltonian  \eqref{eq:H exp}, and the initial data $g$ as in {\rm (G)}.  Then $I$ is non-decreasing  with respect to time.
\end{corollary}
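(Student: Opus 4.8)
\textbf{Proof proposal for Theorem \ref{thm:pess}.}

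The plan is to show that $I$ being non-decreasing follows by combining the variational characterization of $(u,I) = (V,I)$ with a time-reparametrization trick that exploits assumption (L5). First I would argue by contradiction: suppose there exist $0 < t_0 < t_1 < T$ (Lebesgue points of $I$) with $I(t_1) < I(t_0)$. Since $I\in BV(0,T)$, after shrinking the interval I may further assume that $\sup_{(t_0,t_1)} I < I(t_0) - \delta$ for some $\delta > 0$, or at the very least that $I(s) \le I(t_0)$ for a.e. $s$ in a small interval to the right of $t_0$ where the "dip" begins. The idea is that the value function at the earlier time $t_0$ is already $0$ at its minimum point, and I want to use the smallness of $I$ on $(t_0, t_1)$ together with (L5) to produce a competitor trajectory for $V(t_1,\cdot)$ whose cost is \emph{strictly negative}, contradicting $\min V(t_1,\cdot) = 0$.

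Concretely, let $\overline{x}$ be a minimum point of $V(t_0,\cdot)$, so $V(t_0,\overline{x}) = 0$, realized by an optimal curve $\gamma$ on $(0,t_0)$ with $\gamma(t_0) = \overline{x}$. Extend $\gamma$ to $(0,t_1)$ by the constant curve $\gamma(s) = \overline{x}$ for $s\in(t_0,t_1)$; this is admissible for $V(t_1, \overline{x})$. Its cost is
\[
V(t_1,\overline{x}) \le V(t_0,\overline{x}) + \int_{t_0}^{t_1} L(I(s),\overline{x},0)\,ds = \int_{t_0}^{t_1} L(I(s),\overline{x},0)\,ds.
\]
By (L2), $L(\cdot,\overline{x},0)$ is increasing, so $L(I(s),\overline{x},0) \le L(I(t_0),\overline{x},0)$ on the dip. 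This alone does not give strict negativity, so the argument needs to be run the other way: I should instead compare $V(t_1,\cdot)$ computed with $I$ against $V(t_1,\cdot)$ computed with the monotone rearrangement $\widetilde I(s) := \min_{[s,t_1]\cap(0,\cdot)}\ldots$ — more precisely with $\widetilde I$ the smallest non-decreasing function dominating... Here the cleaner route is: define $\widehat I(s) = \inf_{\tau \ge s} I(\tau)$ on $(0,T)$ (the "running future infimum"), which is non-decreasing, $BV$, satisfies $\widehat I \le I$, and agrees with $I$ at a.e. point where $I$ is already "locally the future inf". One checks that if $I$ is \emph{not} non-decreasing then $\widehat I \ne I$ on a set of positive measure. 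By (L2), replacing $I$ by the smaller $\widehat I$ in \eqref{eq:Hopf Lax} \emph{decreases} every trajectory cost, hence $\widehat V := V[\widehat I] \le V[I] = V$, with strict inequality of the minima at some time — giving $\min \widehat V(t,\cdot) < 0$ at that time.

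The role of (L5) is to restore the constraint: I claim $\min_x \widehat V(t,x) \ge 0$ for all $t$, which then forces $\widehat I = I$ a.e. and completes the contradiction. To see this, take an optimal curve $\widehat\gamma$ for $\widehat V(t,x)$; whenever $\widehat I(s) < I(s)$ we have, by definition of the running infimum, $\widehat I(s) = I(s')$ for some $s' > s$ — so one can \emph{time-shift} pieces of the trajectory: replace the portion of $\widehat\gamma$ near $s$ (travelled with nonzero velocity at environmental level $\widehat I(s)$) by a constant piece at level $I(s)$ inserted later, using $L(I,x,v)\ge L(I,x,0)$ from (L5) to control the cost, rebuilding an admissible curve for the \emph{original} problem with $I$ and no larger cost. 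Thus $\widehat V(t,x) \ge V[I](t,x) \ge $ (something $\ge 0$), contradiction. The main obstacle is making this "time-shift/rearrangement of the environmental profile" rigorous: one must carefully match the pieces where $\widehat I$ is constant with later intervals where $I$ attains those values, handle the $BV$ bookkeeping so the reparametrized curve stays absolutely continuous with controlled cost, and ensure the endpoint is preserved; the convexity (L1) and superlinearity (L3) are what keep the reparametrized curves in a compact class so that no cost escapes to infinity. Once this comparison $\widehat V = V$ is established, $\widehat I = I$ a.e. by the uniqueness Theorem \ref{th} (both $\widehat I$ and $I$ are $BV$, non-negative — $\widehat I \ge 0$ since $I\ge 0$ — and produce the same constrained variational solution), and $\widehat I$ non-decreasing gives the conclusion.
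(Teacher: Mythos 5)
The paper's proof of Theorem~\ref{thm:pess} is considerably more direct than what you sketch, and your proposal contains a genuine gap exactly where you flag ``the main obstacle.'' You start with the right move (the constant extension of the optimal trajectory past $t_0$, which is precisely Step~\#2 of the paper, yielding $0\le\int_{t_0}^{t_0+h}L(I(s),x^{t_0},0)\,ds$ and hence $L(I(t_0+),x^{t_0},0)\ge0$), but you then abandon it because you could not see how to get strict negativity. The ingredient you are missing is the \emph{backward} observation (the paper's Step~\#1): along the minimizing curve $\gamma^{t_0}$ ending at a minimum point $x^{t_0}$ of $u(t_0,\cdot)$, the map $s\mapsto u(s,\gamma^{t_0}(s))=\int_0^s L(I,\gamma^{t_0},\dot\gamma^{t_0})\,ds'+g(\gamma^{t_0}(0))$ is $\ge0$ on $(0,t_0)$ and vanishes at $s=t_0$, so its left derivative at $t_0$ is $\le0$; this gives $L(I(t_0-),x^{t_0},\dot\gamma^{t_0}(t_0-))\le0$, hence $L(I(t_0-),x^{t_0},0)\le0$ by (L5). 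Pinching $L(I(t_0-),x^{t_0},0)\le 0\le L(I(t_0+),x^{t_0},0)$ and using the strict monotonicity (L2) yields $I(t_0-)\le I(t_0+)$, and a short supremum argument then rules out any decrease of $I$ between two Lebesgue points. No rearrangement, no comparison with an auxiliary monotone profile, and no re-invocation of the uniqueness theorem is needed.

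The alternative route you switched to (replace $I$ by the running future infimum $\widehat I(s)=\inf_{\tau\ge s}I(\tau)$, show $\widehat V\le V$ by (L2), then try to show $\min\widehat V\ge0$ by a time-shift of trajectories, and finally invoke Theorem~\ref{th}) is a genuinely different idea, but the crucial step $\min\widehat V\ge0$ is not established and I do not believe the sketched rearrangement closes it. When you insert a ``pause'' at time $s$ to delay the motion to a later instant where $I$ is lower, the pause itself incurs cost $L(I(s),\gamma(s),0)$, and (L5) only says $L(I,x,0)$ is the \emph{minimum over $v$}; it need not be $\le0$. So the reparametrized competitor can have strictly larger cost than $\widehat\gamma$, and the desired inequality $\widehat V\ge V$ (or $\ge0$) does not follow. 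Moreover, the matching between the set $\{\widehat I<I\}$ where you pause and the later intervals where $I$ attains the infimum value is nowhere constructed, and the endpoint of the time-reparametrized curve lives at a later time $t'>t$, so you would have to compare against $V(t',\cdot)$ rather than $V(t,\cdot)$, with all the attendant bookkeeping. In short: the first half of your proposal contains the right building block but stops one observation short, and the second half replaces it with a more elaborate argument whose central claim is unproved and, as stated, appears to fail.
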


\begin{proof}[Proof of Theorem \ref{thm:pess}]

We start by choosing the right-continuous representative of $I$ without loss of generality. For each $t>0$, let $x^t$ be a minimum point of $x\mapsto u(t,x)$ as before, and let
$\gamma^t$ be an associated minimizing curve ending up at $x^t$.

\noindent\textbf{Step \#1: $L(I(t-),x^t,0) \leq 0$ for all $t$.}
It follows from the non-negativity constraint and the dynamic programming principle that
$$
0 \leq u(s, \gamma^t(s))= \int_0^s L(I(s'),\gamma^t(s'), \dot\gamma^t(s'))\,ds' + g(\gamma^t(0)) \quad \text{ for }0 < s < t,
$$
and the equality holds when $s = t$. Hence, we deduce that $L(I(t-), \gamma^t(t-), \dot\gamma^t(t-)) \leq 0.$ Since $\gamma^t(t-) = \gamma^t(t) = x^t$, we may use {\rm (L5)} to deduce that
$$
L(I(t-), x^t,0) \leq L(I(t-),\gamma^t(t-), \dot\gamma^t(t-)) \leq 0.
$$

\noindent\textbf{Step \#2: $I(t-) \leq I(t+)$ for all $t$.}
%
Fix $t>0$, let $x^t$ and $\gamma^t$ be as above. We define $\gamma_1: [0,t+1]\to \mathbb{R}^d$ by
$$
\gamma_1(s) = \left\{
\begin{array}{ll}
\gamma^t(s) & \text{ for } 0 \leq s \leq t,\\
x^t  & \text{ for } s >t.  
\end{array}
\right.
$$
Then $\gamma_1 \in AC[0,t+1]$ and for all $0 <h< 1$,
$$
0 \leq  u(t+h, \gamma_1(t+h)) \leq \int_0^{t+h} L(I(s'), \gamma_1(s'),\dot\gamma_1(s'))\,ds' + g(\gamma_1(0)).
$$
Since $0=u(t, \gamma_1(t))=\int_0^{t} L(I(s'), \gamma_1(s'),\dot\gamma_1(s'))\,ds' + g(\gamma_1(0))$, we have
\begin{equation}\label{eq:5.10}
0 \leq \int_t^{t+h} L(I(s'), \gamma_1(s'),\dot\gamma_1(s'))\,ds' \leq \int_t^{t+h} L(I(s'),x^t,0)\,ds'.
\end{equation}
Dividing by $h$, and letting $h \to 0+$, we obtain $L(I(t+), x^t, 0) \geq 0$. Comparing with $L(I(t-),x^t,0) \leq 0$ (by Step \#1), we deduce from the monotonicity of $L$ in $I$ {\rm (L2)} that $I(t-) \leq I(t+)$ for all $t>0$. 

\noindent\textbf{Step \#3: Conclusion.}
%
%
Suppose to the contrary that $I(t_2) < I(t_1)$ for some $t_1 < t_2$. Since $I$ is right-continuous, there exists $t_3>t_2$ such that $I(t)< I(t_1)$ for all $t \in [t_2, t_3)$. 
Let $t_0= \sup\{t \in [t_1, t_3):  I(t_1) \leq I(t)\}$. 
Then $t_0 \leq t_2 < t_3$, and  
\begin{equation*}\label{eq:5.11}
I(t) < I(t_1) \leq I(t_0-)\quad \text{ for }t \in (t_0, t_3).
\end{equation*}
Now, using Step \#1 and \eqref{eq:5.10} from Step \#2, we have
$$
0 \leq \int_{t_0}^{t_3} \left[ L(I(s'), x^{t_0},0) - L(I(t_0-),x^{t_0},0)\right]\,ds'.
$$
However, this is in contradiction with \eqref{eq:5.11}, in view of the fact that $L$ is strictly increasing in $I$ (L2).
\end{proof}

\appendix

\section{Variational and viscosity solutions coincide (proof of Theorem \ref{prop:viscosity})}

Given $I \in BV(0,T)$, let
 $V(t,x)$ denote the  corresponding variational solution of \eqref{eq:Hopf Lax}, and let $u$ denote a locally Lipschitz viscosity solution of \eqref{eq:HJ}. The purpose of this section is to show that $u \equiv V$.

As the Hamiltonian is convex with respect to $p$, 
 sub-solutions in the almost everywhere sense, and viscosity sub-solutions in particular, lie automatically below the variational solution \cite{Barles,Fathi}. We include a proof here for the sake of completeness.
\begin{proposition}
Assume that $u$ is locally Lipschitz,  $u(0,x) \leq g(x)$ for all $x$, and that the following inequality holds for almost every $(t,x)\in (0,T)\times \R^d$,
\begin{equation}\label{eq:ae}
\partial_t u(t,x) + \bH({I}(t),x,\Dx u(t,x))  \leq 0\quad a.e.  
\end{equation}
Then, $u\leq V$. 
\end{proposition}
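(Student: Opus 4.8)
The plan is to prove the comparison statement $u \leq V$ by a direct argument along optimal trajectories, exploiting the almost-everywhere subsolution inequality \eqref{eq:ae} together with local Lipschitz regularity of $u$. Fix $(t,x) \in (0,T) \times \R^d$ and let $\gamma \in \AC(0,t)$ be any curve with $\gamma(t) = x$; it suffices to show
\[
u(t,x) \leq \int_0^t L({I}(s),\gamma(s),\dot\gamma(s))\,ds + g(\gamma(0)),
\]
and then take the infimum over all such $\gamma$. The natural object to examine is the map $s \mapsto u(s,\gamma(s))$. If $u$ were $\mathcal C^1$, one would differentiate, use the chain rule $\frac{d}{ds}u(s,\gamma(s)) = \partial_t u + \Dx u \cdot \dot\gamma$, invoke the subsolution inequality $\partial_t u + \bH({I}(s),\gamma(s),\Dx u) \leq 0$, and then apply the Fenchel--Young inequality $\Dx u \cdot \dot\gamma \leq \bH({I}(s),\gamma(s),\Dx u) + L({I}(s),\gamma(s),\dot\gamma)$ to obtain $\frac{d}{ds}u(s,\gamma(s)) \leq L({I}(s),\gamma(s),\dot\gamma(s))$; integrating from $0$ to $t$ and using $u(0,\gamma(0)) \leq g(\gamma(0))$ finishes the argument.

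The technical work is to make this rigorous when $u$ is only locally Lipschitz and \eqref{eq:ae} holds merely almost everywhere, while $I$ is only $BV$ (hence the right-hand side $\bH(I(t),x,p)$ is only measurable in $t$). The standard device is mollification: let $u^\delta = u * \rho_\delta$ be a spatial (or space-time) mollification of $u$, so that $u^\delta$ is smooth, converges locally uniformly to $u$, and $\Dx u^\delta \to \Dx u$ in $L^p_{loc}$ and a.e. Since $\bH$ is $\mathcal C^2$ and convex in $p$, the mollified function satisfies an approximate subsolution inequality $\partial_t u^\delta + \bH({I}(t),x,\Dx u^\delta) \leq \omega_\delta(t,x)$ with an error term $\omega_\delta \to 0$ in $L^1_{loc}$ (using convexity of $\bH$ in $p$ via Jensen, plus the local Lipschitz bound on $u$ to control the $x$-dependence on the compact set swept out by $\gamma$). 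Then for a.e. curve one can run the chain-rule computation on $u^\delta$ — now legitimately, since $u^\delta$ is smooth and $s \mapsto u^\delta(s,\gamma(s))$ is absolutely continuous whenever $\gamma \in \AC$ — to get
\[
u^\delta(t,x) \leq \int_0^t L({I}(s),\gamma(s),\dot\gamma(s))\,ds + \int_0^t \omega_\delta(s,\gamma(s))\,ds + u^\delta(0,\gamma(0)),
\]
and pass to the limit $\delta \to 0$. Controlling $\int_0^t \omega_\delta(s,\gamma(s))\,ds$ requires some care since $\gamma$ is a fixed curve of measure zero; the usual fix is to mollify in time as well, or to first restrict attention to Lipschitz curves $\gamma$ (for which the infimum in \eqref{eq:Hopf Lax} is unchanged, by a density/truncation argument using the superlinearity (L3)) so that the image of $\gamma$ has controlled measure and Fubini-type arguments apply after an additional averaging over translates.

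I expect the main obstacle to be precisely this passage to the limit in the error term along a single trajectory: reconciling the a.e.-in-$(t,x)$ validity of \eqref{eq:ae} with evaluation along a null set. The cleanest route, which I would follow, is to avoid pointwise evaluation altogether by working with the inequality in integrated form — average the chain-rule identity over a tube of trajectories $\{\gamma + y : |y| \leq \eta\}$, use Fubini to land on $\int\!\!\int \omega_\delta$, send $\delta \to 0$ first and then $\eta \to 0$ using continuity of $u$ — which is exactly the strategy of \cite{dalmasofrankowska} that the authors say they adapt. The remaining ingredients (Fenchel--Young, $u(0,\cdot) \leq g$, and optimizing over $\gamma$) are routine, and the superlinearity (L3) guarantees the infimum in \eqref{eq:Hopf Lax} is attained and finite, so no issue arises there.
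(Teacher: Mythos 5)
Your proof proposal is sound, but it follows a genuinely different technical route from the paper's. You mollify $u$ in space, use convexity of $H$ in $p$ (Jensen) plus $\mathcal C^2$ regularity of $H$ in $x$ to propagate the subsolution inequality to $u^\delta$ with a small error $\omega_\delta$, then run the chain rule on the smooth approximation and pass to the limit, with tube-averaging to handle the difficulty of evaluating an a.e. inequality along a measure-zero graph. The paper instead adapts Fathi's argument (not Dal Maso--Frankowska, which they cite only for the \emph{super}solution direction): after applying Fenchel--Young to rewrite \eqref{eq:ae} as $\partial_t u + \Dx u \cdot v \leq L(I(t),x,v)$ a.e., they invoke Fubini to select a sequence of translated straight lines along which the a.e. inequality holds, integrate along those lines, pass to the limit using continuity of $u$, and then extend from linear to piecewise-linear to general $W^{1,\infty}$ curves by density. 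Both strategies are instances of the same broad idea (use averaging and continuity to sidestep evaluating an a.e.\ inequality on a null set), but the paper's Fubini-and-density argument is shorter because it sidesteps the need to estimate a mollification error term altogether; your route requires more bookkeeping (the $O(\delta)$ bound on $\omega_\delta$, the choice of order of limits $\delta\to0$ then $\eta\to0$) but would also succeed. Two small corrections to your write-up: the tube-averaging idea you ascribe to \cite{dalmasofrankowska} is not what the paper borrows from that reference; and the phrase ``so that the image of $\gamma$ has controlled measure'' is imprecise---the graph of a Lipschitz curve is always a null set in $(0,T)\times\mathbb R^d$; what matters is that the \emph{tube} of translates has positive measure, which is what lets Fubini bite.
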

\begin{proof}
The proof is adapted from \cite[Section 4.2]{Fathi}. A more direct proof can be found in \cite[Section 9]{Barles} but the latter  assumes time continuity for $H$, which does not hold in the present case. A first observation is that \eqref{eq:ae} makes perfect sense as $u$ is differentiable almost everywhere by Rademacher's theorem. We shall establish that
\begin{equation}\label{eq:dominated}
u(t_2,\gamma(t_2)) - u(t_1,(\gamma(t_1)) \leq \int_{t_1}^{t_2} L({I}(s), \gamma(s), \dot \gamma(s))\, ds\, ,
\end{equation} 
for all  curves $\gamma\in W^{1,\infty}$. Thus, the result will follow immediately by taking the infimum with respect to $\gamma$, and invoking regularity of minimizing curves, as in Lemma \ref{lem:regularity}. 

To prove \eqref{eq:dominated}, we proceed by a density argument. 
The case of a linear curve $\gamma = x  + (s-t_1) v$   is handled as follows: firstly, we deduce from \eqref{eq:ae} that 
\begin{equation}\label{eq:ae3}
\partial_t u(t,x) + \Dx u(t,x)\cdot  v \leq L({I}(t), x, v)\quad a.e.  
\end{equation}
Secondly,  by Fubini's theorem  one can find a sequence $x_n\to x$ such that \eqref{eq:ae3} holds almost everywhere in the line $\{ (s,x_n+(s-t_1) v) \}$ for each $n$. 
Therefore, we can apply the chain rule to $u(s,x_n+(s-t_1) v)$, so as to obtain:
\begin{equation}\label{eq:ae4}
\dfrac{d}{ds}\left ( u(s,x_n+(s-t_1) v)\right ) \leq L({I}(s), x_n+(s-t_1) v,v)\quad a.e.
\end{equation}
We deduce that \eqref{eq:dominated} holds true for all linear curves by integrating \eqref{eq:ae4} from $t_1$ to $t_2$ and taking the limit $n\to +\infty$. 

Consequently, \eqref{eq:dominated} holds true for any piecewise linear curve. The conclusion follows by a density argument of piecewise linear curves in the set of curves having   bounded measurable derivatives. 
\end{proof}

It remains to show that viscosity super-solutions lie above the variational solution. 
The criterion for super-solution   for time-measurable Hamiltonians that we adopt is the following one. (See  \cite{Ishii,LP87} for various other equivalent definitions.)
\begin{definition}[Viscosity super-solution]\label{def:supersol}
Let $\phi\in \mathcal C^1(\R^d)$ be such that the minima of $u(t,\cdot) - \phi$ are reached in a ball of radius $R$ for all $t\in [0,T]$. Let $\mathfrak{M}(t)$  be the set of minimum points of $u(t,\cdot) - \phi$, and $m(t) = \min u(t,\cdot) - \phi$. Then, it is required that the following inequality holds true in the distributional sense: 
\begin{equation}\label{eq:supersol}
m'(t) + \underset{y\in \mathfrak{M}(t)}{\sup} H({I}(t), y,  d_x \phi(y)) \geq 0\quad \text{in}\; \mathcal{D}'(0,T)\, .
\end{equation} 
\end{definition}

\begin{proposition}
Assume that $u$ is a locally Lipschitz viscosity super-solution, in the sense of Definition \ref{def:supersol},  and that $u(0,x) \geq g(x)$ for all $x$. Then, $u\geq V$. 
\end{proposition}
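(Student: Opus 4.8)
The plan is to derive, for the super-solution $u$, the \emph{super-optimality principle}: for every $0\le t_1<t_2<T$ and every $x\in\R^d$,
\begin{equation}\label{eq:superopt}
u(t_2,x)\ \ge\ \inf\left\{\int_{t_1}^{t_2}\bL({I}(s),\gamma(s),\dot\gamma(s))\,ds+u(t_1,\gamma(t_1))\ :\ \gamma\in W^{1,\infty}([t_1,t_2]),\ \gamma(t_2)=x\right\}.
\end{equation}
Once \eqref{eq:superopt} is established, the conclusion follows at once: taking $t_1=0$ and using the hypothesis $u(0,\cdot)\ge g$ yields
\[
u(t_2,x)\ \ge\ \inf\left\{\int_0^{t_2}\bL({I}(s),\gamma(s),\dot\gamma(s))\,ds+g(\gamma(0)) : \gamma\in W^{1,\infty}([0,t_2]),\ \gamma(t_2)=x\right\}\ =\ V(t_2,x),
\]
where the last identity uses that Lipschitz curves form a minimizing class in \eqref{eq:Hopf Lax} --- indeed the minimizers are Lipschitz by Lemma~\ref{lem:regularity}. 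Combined with the preceding Proposition, which gives $u\le V$ as soon as $u(0,\cdot)\le g$, this produces $u\equiv V$ in the situation of Theorem~\ref{prop:viscosity}.

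To prove \eqref{eq:superopt} I would follow a characteristic of $u$ run backward from the endpoint $(t_2,x)$: a Lipschitz curve $\gamma$ with $\gamma(t_2)=x$ along which $s\mapsto u(s,\gamma(s))-\int_{t_1}^{s}\bL({I}(\sigma),\gamma(\sigma),\dot\gamma(\sigma))\,d\sigma$ is non-decreasing, which upon integration from $t_1$ to $t_2$ gives \eqref{eq:superopt}. The heuristic behind this is that, choosing $\dot\gamma(s)=d_p\bH({I}(s),\gamma(s),\Dx u(s,\gamma(s)))$, the super-solution inequality together with the Fenchel identity $\bL({I},x,d_p\bH({I},x,p))=p\cdot d_p\bH({I},x,p)-\bH({I},x,p)$ gives
\[
\frac{d}{ds}u(s,\gamma(s))\ =\ \partial_t u+\Dx u\cdot\dot\gamma\ \ge\ -\bH({I}(s),\gamma(s),\Dx u)+\Dx u\cdot\dot\gamma\ =\ \bL({I}(s),\gamma(s),\dot\gamma(s)).
\]
Since $u$ is merely locally Lipschitz (so $\Dx u$ is undefined on a negligible set) and ${I}$ merely $BV$ (so the trajectory equation has only Carath\'eodory structure), this has to be made rigorous by regularization: approximate $u$ by $u_\delta$, obtained by inf-convolution in the space variable, which lies below $u$, converges to it locally uniformly, is semiconcave, and is a viscosity super-solution of \eqref{eq:HJ} with $\bH$ perturbed by an error $o_\delta(1)$; run the previous computation for $u_\delta$ --- whose sub-differential $\partial^-_x u_\delta(s,\cdot)$ reduces to $\{\Dx u_\delta\}$ a.e. --- using the test-function criterion of Definition~\ref{def:supersol} with affine test functions $\phi(x)=p\cdot x$, frozen on small time subintervals and suitably penalized so that the minima of $u_\delta(t,\cdot)-\phi$ remain in a fixed ball for all $t\in[0,T]$; finally extract a limiting Lipschitz curve as $\delta\to0$ --- using the uniform Lipschitz and $BV$ bounds on the approximate trajectories obtained exactly as in Lemma~\ref{lem:regularity} --- and pass to the limit in the integrated inequality to recover \eqref{eq:superopt} for $u$. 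A more synthetic route, closer to \cite{dalmasofrankowska}, reads Definition~\ref{def:supersol} as an invariance (viability) property of the hypograph $\hypo u=\{(t,x,r) : r\le u(t,x)\}$ under the augmented characteristic inclusion associated with $\bH$, and then notes that such invariance together with $u(0,\cdot)\ge g$ forces $u\ge V$, since $V(t,x)$ is by definition the optimal cost of a trajectory of that inclusion issued from the graph of $g$ at time $0$.

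The main obstacle is precisely the rigorous implementation of \eqref{eq:superopt}, i.e. legitimizing the ``follow the characteristics of $u$'' argument in this low-regularity setting. One must simultaneously control: (i) the mere Lipschitz regularity of $u$, through a regularization that is a super-solution only up to a vanishing error and whose backward characteristics can be selected and controlled uniformly in $\delta$; (ii) the merely measurable ($BV$) time-dependence of ${I}$, so that all the Euler--Lagrange and chain-rule manipulations live only in the Carath\'eodory/distributional sense, exactly as in the proof of Lemma~\ref{lem:regularity}; and (iii) the verification that the test-function criterion of Definition~\ref{def:supersol} is actually applicable, which requires an a priori bound guaranteeing that the relevant minima stay in a fixed ball uniformly in time --- a coercivity estimate for $u$ of the type established for $V$ in Lemma~\ref{claim:A}, inherited from $u(0,\cdot)\ge g$ and condition {\rm (G)}. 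The remaining ingredients (density of Lipschitz curves in \eqref{eq:Hopf Lax}, passage to the limit $\delta\to0$, and combination with the sub-solution Proposition) are routine.
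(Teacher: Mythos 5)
Your reduction of the proposition to the super-optimality principle \eqref{eq:superopt} is exactly the right first move, and it matches the paper's strategy. Where you diverge is in how you propose to prove \eqref{eq:superopt}. Your primary route --- inf-convolve $u$ in space to a semiconcave $u_\delta$ and follow the backward characteristic $\dot\gamma=d_p\bH(I,\gamma,\Dx u_\delta)$ --- is a genuinely different line of attack from the paper, which never regularizes $u$ at all. Instead, the paper works directly with the one-sided directional derivative $d_+u(t,x)$, reformulates the inequality $d_+u(t,x)(1,\bv)\geq L(I(t),x,\bv)$ as a non-empty intersection of the (non-convex) hypograph cone $\mathcal{H}_{t,x}=-\bT_{\epi u}(t,x,u(t,x))$ with the convex set $\{1\}\times\mathcal{E}_{t,x}$, shows by the separation theorem that the intersection of the \emph{convex closure} of $\mathcal{H}_{t,x}$ with $\{1\}\times\mathcal{E}_{t,x}$ is non-empty (otherwise the separating linear form would produce a subdifferential pair $(q,p)$ of $u$ with $q+\bH(I(t),x,p)<0$, contradicting the super-solution criterion), and then removes the convex closure by the viability theorem applied to the closed set $S=\epi u$. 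This is precisely your ``more synthetic route, closer to \cite{dalmasofrankowska}'', and it is the one that actually appears. Its payoff is that it selects a backward velocity at \emph{every} point $(t,x)$ (including non-differentiability points), without any regularization, and then Step \#6 bootstraps these local steps into a full backward trajectory by a Zorn/Helly argument using the $BV$ compactness of Lemma~\ref{lem:regularity}.

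Two gaps in your primary route are worth naming. First, the selection problem: you write $\dot\gamma=d_p\bH(I,\gamma,\Dx u_\delta)$, but $\Dx u_\delta$ is defined only a.e.\ in $x$ for each $t$, and the ODE has at best Carath\'eodory right-hand side; making a measurable selection of backward velocities and proving that the value of $u_\delta$ along the resulting flow dominates the Lagrangian cost is exactly the issue that the separation/viability machinery is built to bypass. Second, and more importantly, you flag but do not resolve the passage from the distributional-in-time criterion of Definition~\ref{def:supersol} to pointwise information, and the passage from merely $BV$ to continuous $I$. The paper's resolution of both is a single clean step that you do not identify: since $\partial_I H<0$ by {\rm (H2)}, the Lipschitz Moreau envelopes $I_k(t)=\inf_s(I(s)+k|t-s|)\nearrow I(t)$ satisfy $\bH(I_k(t),x,p)\geq \bH(I(t),x,p)$, so $u$ remains a super-solution for the Hamiltonian with $I$ replaced by $I_k$. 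One then runs the whole argument for the Lipschitz $I_k$ (where Definition~\ref{def:supersol} is equivalent to the usual pointwise criterion), obtains $u\geq V_k$, and lets $k\to\infty$ using $V_k\nearrow V$, which follows from the compactness of minimizing curves in Lemma~\ref{lem:regularity}. Without this reduction, your regularization step lacks the continuity-in-time that every chain-rule manipulation along characteristics quietly uses, so the proposal as written has a real hole there.
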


\begin{proof}
We follow the lines of \cite{dalmasofrankowska} which is essentially based on convex analysis. We adapt their proof in our context for the sake of completeness.  We will first prove the proposition in the special case of $I \in W^{1,\infty}(0,T)$. This assumption will be relaxed to $I \in BV(0,T)$ at the end of the proof.

\noindent\textbf{Step \#1: Finding the backward velocity: setting of the problem.}
The key is to find, for each $(t,x)$, a particular direction $\bv(t,x)$, such that the following inequality holds true:
\begin{equation}\label{eq:graal}
d_+ u(t,x)(1,\bv(t,x)) \geq L(I(t),x,\bv(t,x))\, ,
\end{equation}
where $d_+ u(t,x)(\mu,v)$ is the one-sided directional differentiation in the direction  $(\mu,v)$:
\begin{equation*}
d_+ u(t,x)(\mu,v) = \limsup_{s\to 0+} \dfrac{u(t,x) - u(t-s\mu,x-sv)}{s}\, .
\end{equation*}


We can interpret \eqref{eq:graal} as follows: there exists an element which is common to the partial epigraph of $v \mapsto L(I(t),x, v)$:
$$
\mathcal E_{t,x} = \epi_v (L(I(t),x,v)) = \left\{(v,\ell) \in \mathbb{R}^{d}\times \mathbb{R}: r \geq L(I(t),x,v) \right\}, 
$$and to the hypograph of $v \mapsto d_+ u(t,x)(1,v)$:
$$
\hypo_v(d_+ u(t,x)(1,v)) = \left\{ (v,\ell) \in \mathbb{R}^{d}\times \mathbb{R}: r \leq d_+ u(t,x)(1,v) \right\}.
$$ 
For technical reason, we consider the full hypograph of $d_+u(t,x)$, taken with respect to variables $(\mu,v) \in \mathbb{R} \times \mathbb{R}^d$:
\begin{equation}\label{eq:hypo tx}
\mathcal H_{t,x} = \hypo_{(\mu,v)}(d_+u(t,x)(\mu,v))= \hypo_{(\mu,v)} \left ( \limsup_{s\to 0+} \dfrac{u(t,x) - u(t-s\mu ,x-sv)}{s} \right )\, .
\end{equation}
In contrast with $\hypo_v(d_+ u(t,x)(1,v))$, $\mathcal{H}_{t,x}$ is a cone because the quantity in \eqref{eq:hypo tx} is positively homogeneous with respect to $(\mu,v)$. In fact, it coincides with the definition of a contingent cone, up to a change of sign. If $S\subset \R^{N}$ is a non-empty subset, and  $z\subset \R^{N}$, recall that the contingent cone of $S$ at $z$, denoted by  $\bT_S (z)$, is defined as follows \cite[Definition 3.2.1]{Aubin}:
\begin{equation*}
w\in \bT_S (z) \quad \Longleftrightarrow \quad \liminf_{s\to 0^+} \dfrac{\dist(z + s w,S)}{s} = 0\, .
\end{equation*}
Then, we claim the following equivalence: 
\begin{equation}\label{eq:equivalence}
\mathcal H_{t,x} = - \bT_{\epi u} (t,x,u(t,x))\, .
\end{equation}
For the convenience of readers, the equivalence \eqref{eq:equivalence} is illustrated in Figure \ref{fig:cotingentcones} for a scalar function $u$.
Now we show \eqref{eq:equivalence}. Indeed, $(\mu,v,\ell)$ belongs to $ - \bT_{\epi u}(t,x,u(t,x))$ if and only if there exist subsequences  $s_n\to 0+$ and $(t_n,x_n,u_n)$ such that:
\begin{equation*}
\begin{cases}
t - s_n \mu  = t_n +  o(s_n)\\
x - s_n  v = x_n + o(s_n)\\
u(t,x) - s_n \ell  = u_n + o(s_n)
\end{cases}\, , \quad \text{and}\quad 
u_n \geq u(t_n, x_n)\, .
\end{equation*}
The latter inequality is inherited from the choice $S = \epi u$. Reorganizing the terms, and using the Lipschitz continuity of $u$, we obtain:
\begin{align*}
&u(t,x) - s_n\ell  \geq u(t-s_n \mu,x-s_nv) + o(s_n)\, , \\
& \dfrac{u(t,x) - u(t-s_n \mu,x-s_nv)}{s} \geq \ell + o(1)\, .
\end{align*} 
The latter is precisely \eqref{eq:hypo tx}.   i.e. $(\mu,v, \ell) \in \mathcal{H}_{t,x}$ and this proves \eqref{eq:equivalence}.

\begin{figure}
\begin{center}
\begin{minipage}{.58\linewidth}
\includegraphics[width=\linewidth]{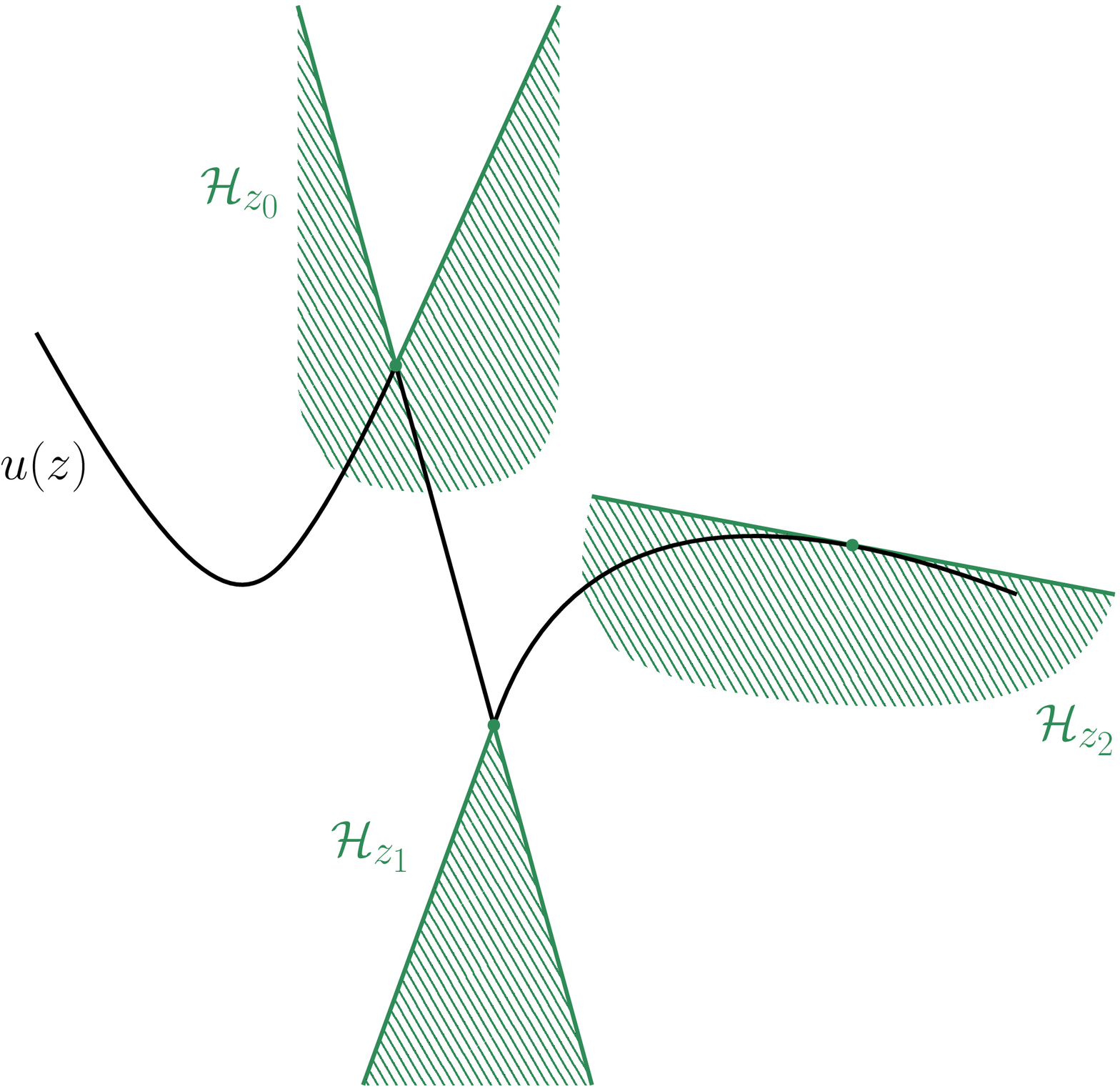} 
\end{minipage}\qquad
\begin{minipage}{.18\linewidth}
\begin{center}
\includegraphics[width=.9\linewidth]{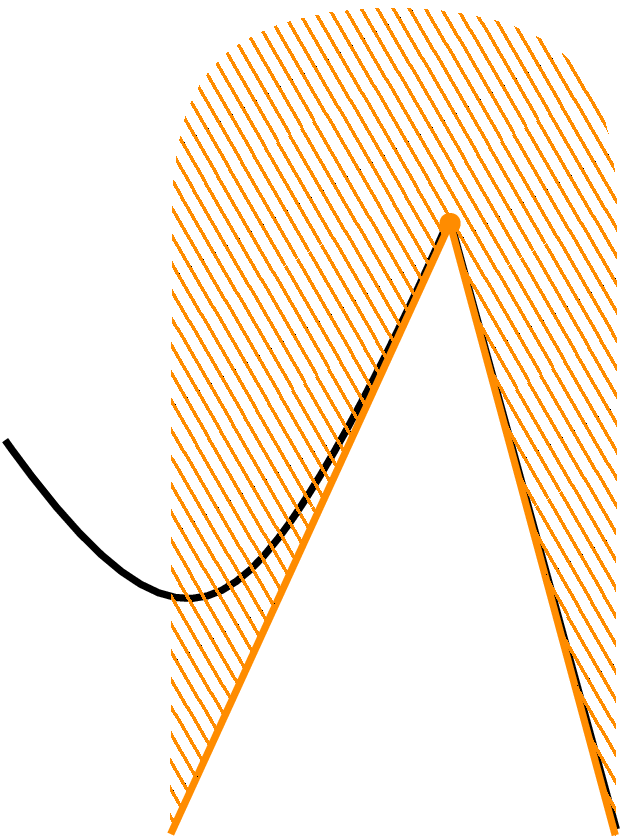}\vspace{10pt}\\ 
\includegraphics[width=.8\linewidth]{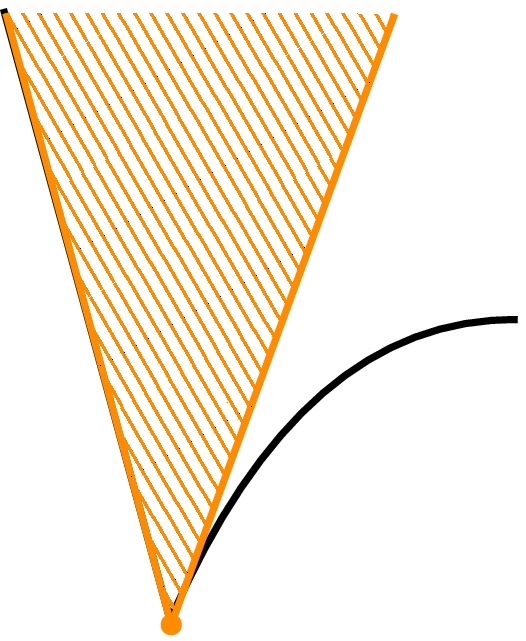}\vspace{10pt}\\
\includegraphics[width=1.4\linewidth]{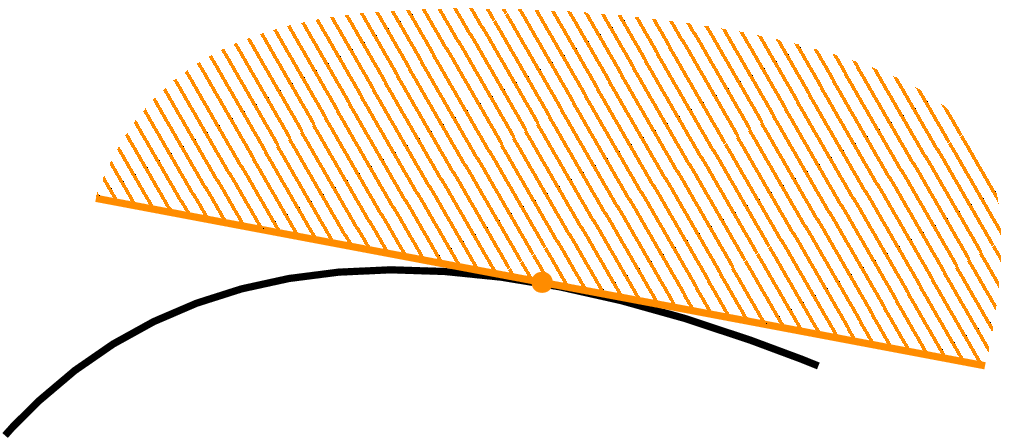} 
\end{center}
\end{minipage}
\caption{Illustration of various shapes of cones as they may appear for a scalar function $u$ (in opposition to the text where  the domain of $u$ is genuinely multi-dimensional). The set $\mathcal H$ is represented in shaded green, whereas the corresponding contingent cones $\bT = - \mathcal H$ are depicted in shaded orange.\label{fig:cotingentcones}}
\end{center}
\end{figure}

%

Summarizing, we are seeking an element $(1,v)$ which is common to $\mathcal H_{t,x}$ and to $\{1\}\times \mathcal E_{t,x}$. 
The latter is a convex set, but the former is not necessarily convex. Therefore, we are led to consider its convex closure $\co(\mathcal H_{t,x})$ in order to use the separation theorem. Next, we shall use the viability theory to remove the convex closure, exactly as in \cite{dalmasofrankowska}.  

\noindent\textbf{Step \#3: Finding the backward velocity: the separation theorem.}
We wish to avoid separation of the two convex sets $\co(\mathcal H_{t,x})$ and $\{1\}\times \mathcal E_{t,x}$. We argue by contradiction. If the two sets are separated, then there exists a linear form $q\cdot  + \langle p, \cdot \rangle$ such that (i) $\co(\mathcal H_{t,x})$ lies below the hyper-plane $\{(\mu,v,\ell): \ell =  q\mu+ \langle p, v \rangle \}$, and (ii) $\{1\}\times \mathcal E_{t,x}$ lies strictly above it \cite{Rockafellar}. We deduce from the latter condition (ii) that $q + \langle p, v \rangle \leq L(I(t),x,v) - \delta$ for all $v\in \R^d$ and some $\delta>0$. This can be recast as $q + H(I(t),x,p) \leq -\delta$ from the definition of the Legendre transform. On the other hand, we deduce from condition (i) that  
\begin{equation*}
\limsup_{s\to 0^+}\dfrac{u(t,x) - u(t-s\mu ,x-sv)}{s}  = d_+u(t,x)(\mu,v) \leq q\mu+ \langle p, v \rangle \, , 
\end{equation*}
{for all }$(\mu,v)\in \R^{d+1}$. Consequently, $(q,p)$ belongs to the subdifferential of $u$ at $(t,x)$. By applying the usual criterion of viscosity super-solutions (for continuous Hamiltonian functions),  we find that $q + H(I(t),x,p) \geq 0$. This is a contradiction. Thus, the two convex sets are not separated, i.e.  
\begin{equation}\label{eq:non empty coH}
(\forall t,x) \quad  \co(\mathcal H_{t,x})\cap \left (  \{1\}\times \mathcal E_{t,x}\right )\neq \emptyset\, .
\end{equation}

\noindent\textbf{Step \#4: Finding the backward velocity: the viability theorem.} 
Note that \eqref{eq:non empty coH} is equivalent to 
\begin{equation}\label{eq:non empty coH2}
(\forall t,x) \quad  \co(-\mathcal{T}_{\epi u}(t,x,u(t,x)))\cap \left (  \{1\}\times \mathcal E_{t,x}\right )\neq \emptyset\, .
\end{equation}
We wish to use the viability theorem \cite[p. 85]{Aubin} (see also \cite[Theorem 2.3]{dalmasofrankowska}):
\begin{theorem}[Viability]\label{th:viability}
Suppose that $G:\R^N \rightsquigarrow\R^N$ is an upper semi-continuous set-valued map with compact convex values. Then for each closed set $S\subset \R^N$, the following statements are equivalent:
\begin{itemize}

\item[\rm (a)]  \quad $(\forall z\in S)   \quad \bT_S (z) \cap G(z) \neq \emptyset$;

\item[\rm (b)] \quad $(\forall z\in S)   \quad \left ( \co \bT_S (z)\right ) \cap G(z) \neq \emptyset$.
\end{itemize}
%
\end{theorem}

Further compactness estimate is required in order to apply Theorem \ref{th:viability}. 
We claim that we can restrict \eqref{eq:non empty coH} to a compact set: 
\begin{equation*}
\co(\mathcal H_{t,x}) \cap \left ( \{1\}\times \mathcal E_{t,x} \right )\cap \left ( \{1\}\times B(0,R_{ |x|})\times \left [m,M\right ]\right )  \neq \emptyset\,,
\end{equation*}
where for each $K>0$,  $R_{K} = \max\{1, r_{K}\}$, with $r_K$ is increasing in $K$ such that
\begin{equation}\label{eq:rk}
\Theta (r) > [u]_{\Lip(\overline{J \times B(0,K)})} (1 + r) + C_\Theta \quad \text{ for all }r \geq r_K  
\end{equation}
(the choice of $r_K$ is possible due to the superlinear growth of $\Theta$), and $m,M$ are respectively $m = \min L$, $M = \max L$ where both minimum and maximum are taken over the set $J\times  \{x\}\times B(0,R_{|x|})$.

To this end, 
consider the following two options: either the dual cone $(\mathcal H_{t,x})^-$ is empty or non-empty. In the first case, it implies $\co (\mathcal H_{t,x}) = \R^d$, so that any element of  $\mathcal E_{t,x}$ is appropriate. In this case we have
\begin{equation}\label{eq:choice1}
(1, 0, L(I(t),x,0)) \in \co(\mathcal H_{t,x}) \cap \left ( \{1\}\times \mathcal E_{t,x} \right )\cap \left ( \{1\}\times B(0,1)\times \left [m,M\right ]\right ) \,.
\end{equation}
In the second case, $(\mathcal H_{t,x})^-$ is non-empty. Hence, there exists a linear form $q\cdot  + \langle p, \cdot \rangle$ such that  $\co(\mathcal H_{t,x})$ lies below the linear set $\{ q\mu+ \langle p, v \rangle \}$ as in  Step \#3. 
 Therefore, every common point $(1,v,\ell) \in \overline{\rm co}(\mathcal{H}_{t,x}) \cap (\{1\}\times \mathcal{E}_{t,x})$ (and there is at least one such point) must satisfy 
$$
 L(I(t),x,v) \leq \ell \leq q + \langle p, v \rangle.
$$
By the facts that (i) $L$ grows uniformly super-linearly (by {\rm (L3)}), and (ii) $(q,p)$ is bounded as it belongs to the subdifferential of the locally Lipschitz function  $u$, i.e. $\max\{|q|, |p|\} \leq [u]_{\Lip(\overline{J \times B(0,|x|)})}$, we deduce
$$
\Theta(|v|) - C_\Theta \leq L(I(t),x,v) \leq \ell \leq [u]_{Lip(\overline{J \times B(0,|x|)})} (1 + |v|).
$$
By the choice of $r_K$ in \eqref{eq:rk}, we must have $|v| < r_K$  with $K = |x|$. 
\begin{equation}\label{eq:choice2}
\textup{i.e.} \quad  \co(\mathcal H_{t,x}) \cap \left ( \{1\}\times \mathcal E_{t,x} \right )\cap \left ( \{1\}\times B(0,r_{|x|})\times \left [m,M\right ]\right )  \neq \emptyset \, .
\end{equation}
By \eqref{eq:choice1} and \eqref{eq:choice2}, and our choice of $R_{|x|}:= \max\{1, r_{|x|} \}$, we find that
\begin{equation*}
\co \left (- \bT_{\{\epi u\}}(t,x,u(t,x))\right  ) \cap [-G(t,x)]  = \co \left(  \mathcal{H}_{t,x}\right) \cap [-G(t,x)]  \neq \emptyset  \,,
\end{equation*}
where $G(t,x) = -\left ( \{1\}\times \mathcal E_{t,x} \right )\cap \left ( \{1\}\times B(0,R_{|x|})\times \left [m,M\right ]\right )$ is a continuous set-valued map with compact convex values. 
In order to apply the viability theorem to the closed subset $S = \epi u$, it remains to check that the statement (b) of Theorem \ref{th:viability}, i.e.
$$
\overline{\rm co} \left(\mathcal{T}_{\epi u}(t,x,U)\right)  \cap G(t,x) \neq \emptyset
$$
holds for all $(t,x,U)\in \epi u$, and not only 
 for points
 $(t,x,u(t,x))$ on the graph of $u$. This is immediate, as $ \bT_{\{\epi u\}}(t,x,U)  = \R^{d+2}$ for $U>u(t,x)$. 

Finally, all the assumptions of the viability theorem are met. As a consequence, we can remove the convex closure in \eqref{eq:non empty coH2}, and thus in \eqref{eq:non empty coH}, so as to obtain: 
\begin{equation*} 
(\forall t,x) \quad   \mathcal H_{t,x} \cap \left (  \{1\}\times \mathcal E_{t,x}\right )\neq \emptyset\, .
\end{equation*}
In particular, for each $(t,x)$ there exists a  vector  $\bv(t,x)$ such that \eqref{eq:graal} holds true.  

\noindent\textbf{Step \#6: Building the backward trajectory up to the initial time.}
Now that we are able to make a small step backward at each $(t,x)$, let $\epsilon>0$ be given, and start from $(t_0,x_0)$. There exists $(s_0,v_0)$ such that 
\begin{equation*}
u(t_0,x_0) \geq  s_0 L(t,x,v_0) + u(t_0-s_0,x_0 - s_0v_0)  - \epsilon s_0\, .
\end{equation*}
By choosing $s_0$ small enough, we can even replace the right-hand-side by:
\begin{equation}\label{eq:single step}
u(t_0,x_0) \geq \int_{0}^{s_0} L(t-s,x_0-s v_0,v_0) \, ds + u(t_0-s_0,x_0 - s_0v_0) - 2 \epsilon s_0\, .
\end{equation}
In particular, we have
\begin{equation*}
u(t_0,x_0) \geq \inf_{\gamma} \int_{t_0 - s_0}^{t_0} L(s',\gamma(s'),\dot \gamma(s'))\, ds' + u(t_0 - s_0,\gamma(t_0 - s_0)) - 2 \epsilon s_0\, , 
\end{equation*}
where the infimum is taken over all $\gamma \in AC[0,t]$ such that $\gamma(t_0) = x_0$. As a result, the set  
\begin{equation*}
\Sigma = \left \{ \tau \in(0, t_0) :  \,\, u(t_0,x_0) \geq \inf_{\gamma} \int_{\tau}^{t_0} L(s',\gamma(s'),\dot \gamma(s'))\, ds' + u(\tau,\gamma(\tau)) - 2 \epsilon (t_0-\tau) \right \}\, .
\end{equation*} 
is non-empty, and $\tau_*:= \inf \Sigma \in [0, t_0 - s_0]$ is well-defined.
We wish to prove that $\tau_* = 0$. Suppose, for contradiction, that $\tau_* >0$. Then, the estimates obtained in Lemma \ref{lem:regularity} allows to extract a converging sequence $\{\gamma_n\}$ such that $\gamma_n$ is defined on the time span $(\tau_n,t)$, with $\tau_n\searrow \tau_*$, and $\{\dot \gamma_n\}$ is uniformly $BV$ ${}^[$\footnote{The family $\{\dot\gamma_n\}$ is even uniformly Lipschitz as $I$ is assumed to be Lipschitz here, see the proof of Lemma \ref{lem:regularity}.}${}^]$. Hence, we can pass to the limit $\dot\gamma_n \to \dot\gamma$ a.e. by Helly's Selection Theorem, and then use Bounded Convergence Theorem to prove that 
\begin{equation*}
u(t_0,x_0) \geq \inf \int_{\tau_*}^{t_0} L(s',\gamma(s'),\dot \gamma(s'))\, ds' + u(\tau_*,\gamma(\tau_*)) - 2 \epsilon (t_0-\tau_*)\, .
\end{equation*}
By applying again the single step backward as in \eqref{eq:single step} at $(\tau_*,\gamma(\tau_*))$, we can push our lower estimate to an earlier time $\tau_{**} \in (0,\tau_*)$, and obtain thoroughly a contradiction. Thus, $\tau_* = 0$ and  
$$
u(t_0,x_0) \geq \inf_{\gamma} \int_0^{t_0} L(s', \gamma(s'), \dot\gamma(s'))\,ds' + g(\gamma(0)) - 2\epsilon t_0 = V(t_0,x_0) - 2\epsilon t_0.
$$
By letting $\epsilon\to 0$, we have established that $u\geq V$ in the case when $t\mapsto I(t)$ is Lipschitz continuous. 

To conclude, it remains to remove the additional continuity assumption on $I(t)$. Let $I \in BV(0,T)$.  First of all, we approximate $I(t)$ from below by a sequence of Lipschitz functions $I_k(t)\nearrow I(t)$ converging pointwise ${}^[$\footnote{Here, we choose the lower semi-continuous representative of $I$ without loss of generality. Note   that the criterion \eqref{eq:supersol} is insensitive to the choice of the representative.}${}^]$:
\begin{equation*}
I_k(t) = \inf_{s>0} \left ( I(s) + k |t-s|\right ) \leq I(t)\, .
\end{equation*}
It follows from {\rm (H2)}  
and \eqref{eq:supersol}  that $u$ is also a super-solution associated with $I_k(t)$. Hence we have 
\begin{equation} \label{eq:approx_ineq}
u \geq V_k  \quad \text{ in }(0,T) \times \mathbb{R}^d, 
\end{equation}where $V_k$ is the variational solution associated with $I_k$.

On the other hand, the compactness estimates on minimizing curves obtained in Lemma \ref{lem:regularity} combined with Lebesgue's dominated convergence theorem guarantees that 
$V_k\nearrow V$. Thus, we may let $k \to \infty$ in \eqref{eq:approx_ineq} to deduce $u \geq V$. This completes the proof.
\end{proof}

\end{document}